\newcommand{\R}{\mathbb{R}}
\newcommand{\Z}{\mathbb Z}
\newcommand{\RS}{\mathbb{S}}
\newcommand{\supp}{\mathsf{supp}}
\renewcommand{\ker}{\mathsf{ker}}
\DeclareMathOperator{\Dom}{\mathsf{Dom}}
\newcommand{\tworoot}{\mathsf{2root}}
\DeclareMathOperator{\rot}{\mathsf{rot}}
\newcommand{\Diff}{\mathsf{Diff}^1}
\newcommand{\HR}{\mathsf{Homeo}}
\newcommand{\GL}{\mathsf{GL}}
\newcommand{\BS}{\mathsf{BS}}
\renewcommand{\vsp}{\vspace{0.2cm}}
\newtheorem{thm}{Theorem}[section]
\newtheorem{thmA}{Theorem}
\newtheorem{cor}[thmA]{Corollary}
\newtheorem{claim}{Claim}
\newtheorem*{claim*}{Main Claim}
\newtheorem{prop}[thm]{Proposition}
\newtheorem{lem}[thm]{Lemma}
\theoremstyle{definition}
\theoremstyle{remark}
\newtheorem{ex}[thm]{Example}
\newtheorem{rem}[thm]{Remark}
\begin{document}

\begin{frontmatter}[classification=text]

\title{One-dimensional Actions of Higman's Group} 

\author[rivas]{Crist\'{o}bal Rivas\thanks{Partially supported by FONDECYT 1181548.}}
\author[tries]{Michele Triestino\thanks{Partially supported MathAmSud RGSD  (Rigidity and Geometric Structures in Dynamics) 19-MATH-04, the project ANR Gromeov (ANR-19-CE40-0007), and the project ``Jeunes
G\'eom\'etres'' of F. Labourie (financed by the Louis D. Foundation).}}

\begin{abstract}
We build a faithful action of Higman's group on the line by homeomorphisms, answering a question of Yves de Cornulier. As a by-product we obtain many quasimorphisms from Higman's group into the reals. We also show that every action by $C^1$-diffeomorphisms of Higman's group on  the line or the circle is trivial.
\end{abstract}
\end{frontmatter}

\section{Introduction}

The first example of an infinite, finitely generated and simple group was built by Higman \cite{Higman} as a quotient of the group with the following presentation
\begin{equation}\label{eq Higman} \langle \, a_i \;\; (i\in \Z/4\Z ) \mid a_ia_{i+1}a_i^{-1}=a_{i+1}^2 \;  \rangle=H.\end{equation}
Indeed, Higman observed that $H$ is infinite, in fact torsion free, and moreover that if four elements in a group satisfy the relations above and each of them has finite order, then they all have to be the group identity. In particular $H$ has no non-trivial finite quotient, 
and the quotient of $H$ by a maximal normal subgroup is infinite, simple and finitely generated. With time, the name \emph{Higman's group} was attached to the group with the above presentation.

The fact that $H$ has no non-trivial finite quotient implies that every finite dimensional linear representation $\rho: H\to \GL(V)$ is trivial, since finitely generated groups of matrices are residually finite by a theorem of  Mal\textprime cev \cite{malcev}.  It also implies that every  $C^1$-action $\rho:H\to \Diff(M)$ having a periodic orbit is trivial if $M$ is a connected manifold. Indeed, since $H$ has no finite quotient other than the trivial, the finite orbit must be a global fixed point and, since $H$ has only trivial linear representations, the derivative of every $h\in H$ must be trivial at that point. It then follows from the Thurston Stability Theorem \cite{thurston} that if $\rho(H)$ is not the trivial group, then $\rho(H)$ (and hence $H$) must admit a surjective homomorphism onto $\Z$, which is certainly impossible since $H$ has trivial abelianization\footnote{The fact that $H$ is finitely generated and has trivial abelianization implies that $H$ is \emph{not} {(locally) indicable}. Curiously, this notion was introduced by Higman to study zero divisors and units in group ring \cite{Higman-ring}. Compare with our Corollary \ref{c:ZDC}. The fact that locally indicable groups are left-orderable is now well known and was first proved by Burns and Hale in \cite{burns-hale}.}. This immediately implies, for instance, that every representation $\rho:H\to \Diff([0,1])$ is trivial since $0$ is always in a finite orbit.

The main purpose of this work is to show that, despite the constrains described above, Higman's group can be faithfully represented as a group of orientation-preserving homeomorphisms of the real line\footnote{Note that, since $H$ has no finite index subgroup, every action of $H$ by homeomorphisms of an oriented manifold must preserve the orientation.}. This implies, in particular, that $H$ admits faithful representations inside the group of homeomorphisms of any given manifold. Indeed, choose a ball $B^d$ in a manifold $M^d$, the complement of a point $p\in B^d$ in $B^d$ is homeomorphic to $\R\times \mathbb S^{d-1}$; given an action of $H$ on $\R$, extend it to an action on $\R\times \mathbb S^{d-1}$, so that the action on the $\mathbb S^{d-1}$-factor is trivial: this gives an action of $H$ on $B^d$, which fixes $p$ and the boundary $\partial B^d$, so that it can be extended as the identity outside $B^d$.
 
We start by showing, in Section \ref{sec construction}, that $H$ admits a non-trivial action on the real line. This amounts to find 
four homeomorphisms $a$, $b$, $c$, $d$ of the line satisfying 
\[aba^{-1}=b^2 \; , \; bcb^{-1}=c^2\;,\; cdc^{-1}=d^2 \; , \; dad^{-1}=a^2.\]
In our construction, $b$ and $d$ will be \emph{lifts} to the real line of strong parabolic homeomorphisms of the circle, so that the subgroup $\langle b,d\rangle$ display ping-pong dynamics and acts without global fixed points. The choice of the generators $a$ and $c$ is more subtle and it relies on the fact that the Baumslag-Solitar group $\BS(1,2)=\langle \alpha,\beta\mid \alpha \beta\alpha^{-1}=\beta^2\rangle$ admits two different  and easy-to-describe actions on the real line (see the beginning of Section \ref{sec construction}).  We will build $a$ and $c$ as the limit of sequences of homeomorphisms $(a_n)_{n\geq 0}$ and $(c_n)_{n\geq 0}$ respectively, where the relations 
\[a_nba_n^{-1}=b^2\quad \text{and}\quad c_ndc_n^{-1}=d^2,\]
hold for even $n$'s, and the relations 
\[bc_nb^{-1}=c_n^2\quad \text{and}\quad da_nd^{-1}=a_n^2,\]
hold for odd $n$'s. At each step, we will perform slight modifications of the homeomorphisms so that, \emph{in the limit}, both sets of relations will be satisfied. This provides a non-trivial representation 
\[\varphi: H \to \HR_+(\mathbb R).\]

The fact that $\varphi:H\to\HR_+(\R)$ is non-trivial entails that $\varphi(H)$ is a {\em left-orderable} group, that is $\varphi(H)$ admits a total order $\preceq$ such that $\varphi(f)\preceq \varphi(g)$ implies that $\varphi(hf)\preceq \varphi(hg)$ for every $f,g,h\in H$.  In fact, for countable groups being left-orderable is equivalent to admit an injective representation into $\HR_+(\R)$ (see, for instance, \cite{Clay-Rolfsen, ghys}). In our case, we are unable to decide whether $\varphi$ is injective or not, but, using Kurosh Theorem \cite{serre}, in Section \ref{sec faithful} we will show that the kernel of $\varphi$ must be a free group, which is one of the basic examples of left-orderable groups \cite{Clay-Rolfsen, ghys}. In particular, we can {\em lexicographically} extend a left-order on $\varphi(H)$ to a left-order on $H$ (see Section \ref{sec faithful}) so that we can deduce:

\begin{thmA}\label{t:LO}
	Higman's group $H$ is left-orderable and hence it admits a faithful action on the real line by homeomorphisms.
\end{thmA}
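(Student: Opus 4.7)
The plan is to combine three inputs, two of which will occupy the subsequent sections. The first is the non-trivial homomorphism $\varphi : H \to \HR_+(\R)$ built in Section \ref{sec construction}. Since $\varphi(H)$ is a countable subgroup of $\HR_+(\R)$, it is left-orderable by the standard dynamical construction: fix a countable dense set $\{x_i\}_{i\geq 0}\subset\R$, and compare two elements of $\varphi(H)$ via the lexicographic order of their orbit sequences $(\varphi(g)x_i)_{i\geq 0}$. This produces a left-invariant total order $\preceq_Q$ on $Q := \varphi(H)$; see \cite{Clay-Rolfsen, ghys}.

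The second input, to be established in Section \ref{sec faithful}, is that $N := \ker \varphi$ is a free group. The strategy is to exploit the amalgamated free product structure underlying the presentation \eqref{eq Higman} (grouping its relations into Baumslag-Solitar factors $\BS(1,2)$) and to apply Kurosh's subgroup theorem \cite{serre}: any subgroup of such an amalgam that meets every conjugate of a vertex subgroup trivially is free. The crucial verification is that $N$ does intersect each vertex group trivially, which boils down to checking that specific elements coming from the construction in Section \ref{sec construction} act non-trivially on $\R$; this is where the concrete ping-pong dynamics of $b$, $d$ and the limit procedure defining $a$, $c$ are essential. Once this is in place, $N$ is free, and hence left-orderable with some left-order $\preceq_N$.

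Granted these two ingredients, left-orderability of $H$ follows by the standard lexicographic extension along the short exact sequence $1 \to N \to H \to Q \to 1$: declare $g \prec h$ if either $\varphi(g) \prec_Q \varphi(h)$, or $\varphi(g) = \varphi(h)$ and $e \prec_N g^{-1}h$. Left-invariance is verified coset by coset, inheriting it from $\preceq_Q$ between cosets and from $\preceq_N$ within a coset. Finally, since $H$ is finitely generated and hence countable, the resulting left-order yields a faithful action on $\R$ by orientation-preserving homeomorphisms via the equivalence recalled in the introduction \cite{Clay-Rolfsen, ghys}, completing the theorem. The main obstacle is clearly proving that $\ker \varphi$ is free: the non-triviality of $\varphi$ and the lexicographic extension step are comparatively routine once the amalgam decomposition of $H$ has been set up.
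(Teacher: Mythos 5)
Your overall architecture coincides with the paper's: the non-trivial action $\varphi$ from Section \ref{sec construction}, freeness of $N=\ker(\varphi)$ via the amalgam structure of $H$ and Kurosh's theorem, and then the lexicographic extension of left-orders along $1\to N\to H\to\varphi(H)\to 1$, followed by the standard fact that a countable left-orderable group embeds into $\HR_+(\R)$. The extension step and the final realization step are fine and are exactly what the paper does.

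However, your sketch of the crucial ingredient, freeness of $N$, is not correct as stated. The relevant decomposition is $H=H_1*_{\langle b,d\rangle}H_2$ with vertex groups $H_1=\langle d,a,b\rangle$ and $H_2=\langle b,c,d\rangle$ (each of which is itself $\BS(1,2)*_{\langle a\rangle}\BS(1,2)$, resp.\ over $\langle c\rangle$), and edge group $\langle b,d\rangle$. You propose to verify that $N$ meets every conjugate of a vertex group trivially, i.e.\ that $\varphi$ is injective on $H_1$ and $H_2$; but nothing in the construction yields this (the authors cannot even decide whether $\varphi$ itself is injective, and they only prove that these intersections are \emph{free}, not trivial), and it is not needed. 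What can be verified is the weaker statement that $N$ meets the \emph{edge} group $\langle b,d\rangle$ trivially --- this is where the ping-pong enters, since $b,d$ generate a free group of rank $2$ in the image. Kurosh's theorem then gives only that $N$ is a free product of a free group with the intersections $N\cap gH_ig^{-1}$, which a priori are non-trivial; one must run the argument a second time inside each vertex group, using $H_1=\langle d,a\rangle*_{\langle a\rangle}\langle a,b\rangle$: there the $\BS(1,2)$ vertex groups do intersect $N$ trivially, because every proper quotient of $\BS(1,2)$ is abelian while its image under $\varphi$ is not, so $N\cap H_1$ (and likewise $N\cap H_2$) is free, and hence so is $N$. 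If instead by ``vertex groups'' you meant the four $\BS(1,2)$ subgroups directly, note that $H$ is not an amalgam (nor a graph of groups) with these as vertex groups --- it is a square of groups whose universal cover is $2$-dimensional --- so the principle ``trivial intersection with all conjugates of vertex groups implies free'' does not apply in one step; the iterated amalgam through the intermediate groups $H_1,H_2$, and the full strength of Kurosh's subgroup theorem, are genuinely needed.
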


As a direct consequence of left-orderability (see \cite[\S 1.8]{Clay-Rolfsen}), we get that the Kaplasky Zero-Divisor Conjecture holds for Higman's group:

\begin{cor}\label{c:ZDC}
If $R$ is a ring without zero-divisors, then the group ring $R[H]$ of Higman's group has no zero-divisor.
\end{cor}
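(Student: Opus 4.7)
The plan is to derive the corollary as a direct, formal consequence of Theorem A together with the well-known fact that the group ring of a left-orderable group over a domain is itself a domain (a classical result documented in \cite[\S 1.8]{Clay-Rolfsen}). Theorem A provides a left-invariant total order $\preceq$ on $H$, and the corollary then follows by invoking this standard result.

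For the reader's convenience, I would briefly sketch the underlying argument. Writing $\alpha = \sum_{g \in \supp(\alpha)} r_g g$ and $\beta = \sum_{h \in \supp(\beta)} s_h h$ for two nonzero elements of $R[H]$, the coefficient of a group element $z \in H$ in the product $\alpha\beta$ equals $\sum_{gh = z} r_g s_h$. The strategy is to exhibit $z \in H$ for which this sum collapses to a single term of the form $r_{g^*} s_{h^*}$, which is then nonzero since $R$ has no zero divisors. When $\preceq$ is bi-invariant, the choice $z = g^* h^*$, with $g^* = \max \supp(\alpha)$ and $h^* = \max \supp(\beta)$, works immediately: any other decomposition $gh = g^* h^*$ with $g \in \supp(\alpha)$, $h \in \supp(\beta)$ would violate the order. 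When $\preceq$ is only left-invariant, the argument requires slightly more care, typically proceeding by induction on $|\supp(\alpha)| + |\supp(\beta)|$: after normalizing $\alpha$ and $\beta$ by suitable left translations so that the minima of their supports become the identity, one uses the left-invariance of $\preceq$ to derive a contradiction with the minimality hypothesis.

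There is no substantive obstacle here: the entire mathematical content of the corollary is already encoded in Theorem A, and the remainder is a formal deduction from a classical and well-documented fact. Accordingly, my proof would consist of a single sentence citing \cite[\S 1.8]{Clay-Rolfsen} applied to the left-orderable group $H$ supplied by Theorem A.
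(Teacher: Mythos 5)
Your proposal is correct and is essentially identical to the paper's own treatment: the paper likewise deduces the corollary in one line from Theorem~\ref{t:LO} together with the classical fact, cited from \cite[\S 1.8]{Clay-Rolfsen}, that the group ring of a left-orderable group over a ring without zero-divisors has no zero-divisors. Your supplementary sketch of that classical fact is not needed for the deduction (and the left-invariant case is handled in the cited reference), so the one-sentence citation you propose matches the paper exactly.
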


We point out that all the homeomorphisms in our construction of $\varphi:H\to \HR(\R)$  actually commute with the translation $T:x\mapsto x+2$. Thus, we get a faithful action of  $H$ on the circle $\R/_{T(x)\sim x}$ without global fixed points. See Proposition \ref{prop circle action}.
With the same idea of construction, we are able to describe countably many different (i.e.~pairwise non-semi-conjugate\footnote{Recall that two actions $\psi_1:G\to \HR_+(\R)$ and $\psi_2:G\to \HR_+(\R)$ are semi-conjugate, if there is a proper, non-decreasing map $\alpha:\R\to \R$ such that $\alpha\circ \psi_1(g)=\psi_2(g)\circ \alpha$ for all $g\in G$. Two actions on the circle are semi-conjugate if they have lifts to the real line which are semi-conjugate. See \cite{ghys,mann,bfh} for further details.}) actions of $H$ on the circle, and uncountably many different actions on the real line, all without global fixed points. See Section \ref{sec quasi morphisms}.

\begin{rem}
	We have already mentioned that Higman's group $H$ is perfect (that is, its abelianization is trivial), so every homomorphism $\Phi:H\to \R$ is trivial. A weaker notion is that of \emph{quasimorphism} to $\R$, that is, a map $\Phi:H\to \R$ for which there exists $D>0$ such that for every $g,h\in H$, one has
	\[|\Phi(gh)-\Phi(g)-\Phi(h)|\le D.\]
The set of all quasimorphisms $H\to \R$ defines the real vector space $\mathcal{QM}(H)$. By cohomological considerations, one deduces that the dimension of $\mathcal{QM}(H)$ is the cardinal of continuum: indeed, for perfect groups, the quotient $\mathcal{QM}(H)/\{\text{bounded functions}\}$ is isomorphic to the second bounded cohomology group $H^2_b(H;\R)$, whose dimension is the cardinal of continuum since $H$ is a non-trivial amalgam \cite{Fujiwara}. Similarly, one can consider the second bounded cohomology group $H^2_b(H;\Z)$ with integer coefficients. Every action on the circle without global fixed points determines a non-trivial cocycle in $H^2_b(H;\Z)$, by the so-called \emph{bounded Euler class}. Moreover, two actions  of $H$ are semi-conjugate if and only if their bounded Euler classes in $H^2_b(H;\Z)$ are the same \cite{ghys}. Therefore, the actions constructed in Section \ref{sec quasi morphisms} provide countably many linearly independent classes in  $H^2_b(H;\Z)$. See the references \cite{ghys,frigerio} for more about bounded cohomology related to group actions on the circle.
\end{rem}


In contrast, although our actions have no fixed points neither periodic orbits, we can still show that they cannot be made differentiable. In fact we have:

\begin{thmA}\label{t:noC1} 
	Every representation $\rho:H\to \Diff(\R)$ is trivial. The same holds for every representation $\rho:H\to \Diff(\mathbb S^1)$.
\end{thmA}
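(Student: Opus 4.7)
The plan is to reduce Theorem~\ref{t:noC1} to showing the action has a global fixed point. Suppose $\rho(H)$ fixes a point $p\in M$ (where $M=\R$ or $\mathbb{S}^1$). Then the derivative representation $g\mapsto\rho(g)'(p)$ is a homomorphism from $H$ into $\R_{>0}^{*}$, and it is trivial because $H$ is perfect. Thurston Stability would then yield a non-trivial homomorphism $H\to\R$ unless the action is trivial on a one-sided neighbourhood of $p$; perfectness again rules this out, so $\rho$ is trivial on a two-sided neighbourhood of $p$. Hence $\Fix(\rho(H))$ is both closed (trivially) and open, and by connectedness of $M$ it equals the whole manifold.

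It therefore suffices to exhibit a global fixed point. I focus on $M=\mathbb{S}^1$ first. Write $\phi_i=\rho(a_i)$. Combining the relation $\phi_i\phi_{i+1}\phi_i^{-1}=\phi_{i+1}^2$ with the conjugation-invariance and homogeneity $\rot(\phi^n)=n\rot(\phi)$ of the rotation number in $\R/\Z$, one obtains $\rot(\phi_{i+1})=0$; thus $F_{i+1}:=\Fix(\phi_{i+1})$ is non-empty. The same relation applied to a point $p\in F_{i+1}$ shows that $\phi_i^{-1}(p)\in F_{i+1}$, and differentiating it yields the recursion
\[
\phi_{i+1}'\bigl(\phi_i^{-n}(p)\bigr)=\phi_{i+1}'(p)^{2^n}.
\]
Since $\mathbb{S}^1$ is compact, $\phi_{i+1}'$ is bounded and bounded away from zero, which forces $\phi_{i+1}'(p)=1$ at every $p\in F_{i+1}$. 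To find a common fixed point of consecutive generators $\phi_i,\phi_{i+1}$ I would then analyse $\phi_i|_{F_{i+1}}$: if $\phi_i$ has a fixed point in $F_{i+1}$ we are done; otherwise $\phi_i$ permutes the open arcs complementary to $F_{i+1}$. Either it fixes one of them (its endpoints being common fixed points in $F_i\cap F_{i+1}$), or all arc-orbits are infinite and accumulate, by compactness of $\mathbb{S}^1$, at a point lying in the closed set $F_{i+1}$ and fixed by $\phi_i$ by continuity. One then propagates around the cycle $\Z/4\Z$, exploiting the parabolic behaviour at fixed points, to produce a point fixed by all four generators.

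For $M=\R$ the strategy is analogous but adapted to the absence of compactness. I would either reduce to the circle case by working with the two-point compactification $[-\infty,+\infty]$ of $\R$, taking care that the boundary behaviour does not spoil the Thurston Stability step, or else run the arguments directly on $\R$, replacing the rotation number by the asymptotic translation number and using the $C^1$-regularity together with the Baumslag--Solitar relations to rule out the translation-like dynamics that would prevent the existence of a fixed point. The main obstacle I expect is the very last propagation step on the circle (from pairwise common fixed points to a simultaneous fixed point of the entire group), and the comparably delicate adaptation to the non-compact setting of the line.
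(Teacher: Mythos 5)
Your opening reduction is sound and matches what the paper uses implicitly: if $\rho(H)$ has a fixed point, perfectness kills the derivative homomorphism, Thurston stability plus perfectness kills the germ, so $\Fix(\rho(H))$ is open and closed, hence everything. Your computation that every fixed point of every generator is parabolic is correct, and so is the existence of common fixed points for consecutive generators (your orbit argument, or an invariant measure for the amenable group $\langle\rho(a_i),\rho(a_{i+1})\rangle$ as in the paper). The genuine gap is the step you explicitly defer: ``propagating around the cycle $\Z/4\Z$'' to a point fixed by all four generators. That step is the actual content of the theorem, and it cannot be carried out by the soft dynamics you sketch, because the paper itself constructs a $C^0$ action of $H$ on the circle with \emph{no} global fixed point (Remark~\ref{rem projection}, Proposition~\ref{prop circle action}); any successful argument must therefore use a substantial $C^1$ input beyond parabolicity. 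The paper's missing-from-your-proposal ingredient is the rigidity theorem of \cite{BMNR}: on a maximal interval where a copy of $\BS(1,2)$ acts by $C^1$ diffeomorphisms without a common fixed point, either the ``translated'' generator is the identity there, or the action is conjugate to the standard affine one with derivative $2$ at the interior fixed point. The paper combines this with a count of components of supports on the circle, and, on the line, with the purely topological Lemma~\ref{l:noC1}, which shows that a fixed-point-free action of $H$ on $\R$ forces some copy of $\BS(1,2)$ to preserve a compact interval and act there in a non-affine (Conradian) way — impossible in $C^1$ by \cite{BMNR}. (In fact your parabolicity observation meshes well with that dichotomy: it excludes the affine branch outright; but without invoking such a rigidity statement the argument does not close.)

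The line case as you outline it is also too thin. A $C^1$ diffeomorphism of $\R$ need not extend differentiably to the two-point compactification, so the derivative and Thurston-type arguments break down at $\pm\infty$; there is no invariant measure or translation-number homomorphism available for a general fixed-point-free action on the line; and a priori a generator could act on $\R$ with no fixed points at all. The paper handles exactly these issues topologically: using the classification of $\BS(1,2)$-actions on the line \cite{rivas} it shows (Claim~\ref{claim1}) that the fixed-point set of each generator accumulates at $\pm\infty$, then a combinatorial argument (Claim~\ref{claim2}) produces the compact invariant interval with a non-affine action, and only then does the $C^1$ hypothesis enter via \cite{BMNR}. Finally, the circle case can be reduced to the line because $H$ is acyclic, so $H^2(H;\Z)=0$ and every circle action lifts to $\R$ — an alternative you may find cleaner than constructing a global fixed point directly.
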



Our proof relies on the knowledge of $C^1$-actions of the Baumslag-Solitar group
\[\BS(1,2)=\langle \alpha, \beta\mid \alpha \beta \alpha^{-1}=\beta^2\rangle\]
on the line \cite{BMNR}  together with an analysis of the possible combinatorics of fixed points of elements of $H$ when acting on the line by homeomorphisms. The proof is given in Section \ref{sec smooth} and it is independent of the rest of the paper.

\section{The construction of $\varphi$}
\label{sec construction}

\subsection{Preliminaries}

Higman's group contains four canonical copies of the Baumslag-Solitar group 
\[\BS(1,2)=\langle \alpha, \beta\mid \alpha \beta\alpha^{-1}=\beta^2\rangle.\]
This latter group acts on the real line via its \emph{standard affine action}  $\alpha:x\mapsto 2x$, $\beta:x\mapsto x+1$ (see Figure~\ref{fig:Standard_affine}, left), which is a faithful action since every proper quotient of $\BS(1,2)$ is abelian. \emph{Non-standard affine} actions of $\BS(1,2)$ on the line can be obtained from the standard one by blowing up an orbit, following the method of Denjoy (see  Figure~\ref{fig:Standard_affine}, right). In fact, it is not very hard to see that \emph{if there is} a non-trivial action of $H$ on the line, then a non-standard affine action must appear at least locally (cf.\ the proof of Theorem \ref{t:noC1} in Section \ref{sec smooth}). The following example is the one that we will use.

\begin{ex}[Denjoy trick]\label{ex denjoy} Consider the affine maps of the real line $f(x)=2x$ and $g(x)=x+1$, which generate a group isomorphic to $\BS(1,2)$. The orbit of $0$ coincides with the dyadic rationals $\Z[1/2]=\{n/2^m\mid n\in \Z, m\geq 0\}$. We blow up the orbit of $0$ to build a \emph{new} real line, that is, we replace every point $o\in \Z[1/2]$ of the orbit of $0$ by a closed interval $J_o$ in such a way that, for every compact subset $K$,  the sum $\sum_{o\in K} |J_o|$ is bounded (we are denoting by $|J|$ the length of the interval $J$). For our purpose, if $n/2^m$ is a reduced fraction, we  set $|J_{n/2^m}|=\kappa/2^{2^m}$, for some constant $\kappa>0$ to be fixed later. By convention the reduced fraction  of $0$ is $0/1$, so $|J_0|=\kappa/4$. 

The maps $f$ and $g$ act naturally on the boundary points of the collection of intervals $\{J_o\}$, and this action can be extended to the \emph{new} real line by taking linear interpolation and passing to the closure of the union of intervals. We put the new origin (still denoted by $0$) at the midpoint of $J_0$. The resulting maps are called $\bar f$ and $\bar g$.  Observe that since $n/2^m$ is a reduced fraction if and only if $1+n/2^m=(2^m+n)/2^m$ is a reduced fraction, we have that the homeomorphism $\bar g$ preserves the length of the intervals it permutes and hence $\bar g$ is still a translation. By adjusting $\kappa$, and possibly replacing $\bar g$ by some power of it, we can assume that $\bar g$ is the translation by  2 and that $|J_0|\leq 1/4$. 

Finally, we let $M\subseteq \R$ be the minimal invariant set for $\langle \bar f ,\bar g\rangle$ action on the line. That is, $M$ is closed, $\langle \bar f,\bar g\rangle$-invariant, and contains no proper, closed, $\langle \bar f,\bar g \rangle$-invariant subsets. In the present case, since $J_0$ is a wandering interval, and no $\langle \bar f ,\bar g\rangle$-orbit is discrete, the set $M$ is locally a Cantor set which intersects trivially the $\langle \bar f ,\bar g\rangle$-orbit of $J_0$ (see \cite{ghys}).
\end{ex}

\begin{rem} Observe that $\bar f$ from Example \ref{ex denjoy} is the identity on $J_0$. For our purpose it will be important to replace $\bar f$ by another map which is not the identity on $J_0$, while preserving the action on the minimal set $M$ from the original action (Figure~\ref{fig:Standard_affine}, right). This will be done in Lemma \ref{lem 1} below. 
\end{rem}

\begin{figure}[t]
	\[
	\includegraphics[scale=1]{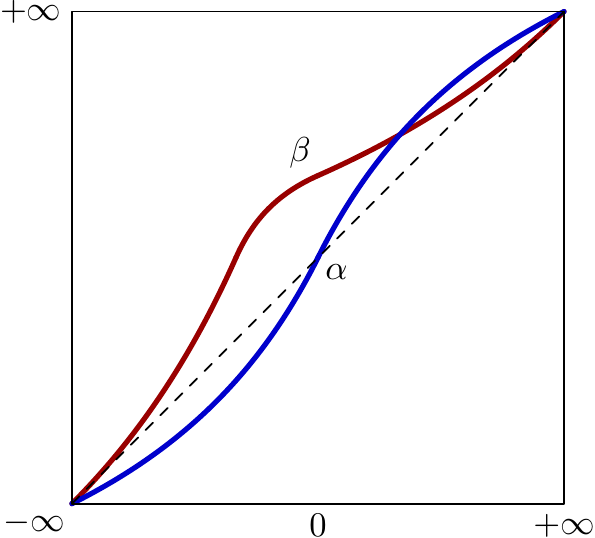}
	\hspace{2cm}
	\includegraphics[scale=1]{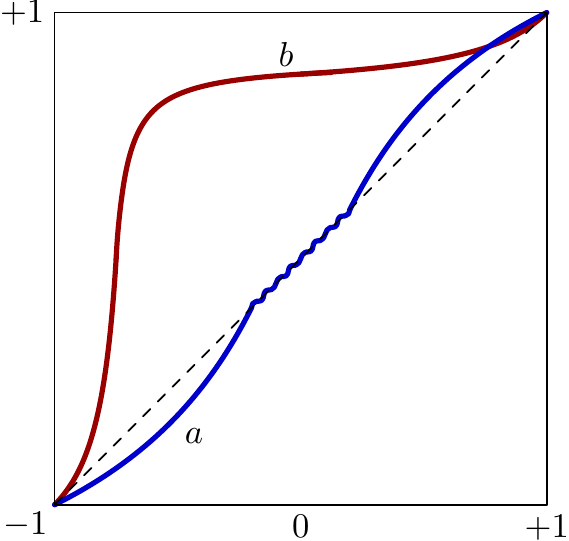}
	\]
	\caption{The standard affine action of $\BS(1,2)$ (left) and an action \emph{\`a la Denjoy} (right).}\label{fig:Standard_affine}
\end{figure}

Crucially,  $\BS(1,2)$ admits a different family of actions without fixed points on the real line. These are  the so-called \emph{Conradian actions}, which are actions where $\alpha$ acts without fixed points and $\beta$ fixes a fundamental domain of $\alpha$ \footnote{For the purpose of this note, this can be taken as a definition of Conradian action of $\BS(1,2)$. In general, a Conradian action of a  group is a faithful action where every induced order is Conradian. Equivalently, it is an action without crossed elements, see \cite{navas} for details.  In \cite{rivas} it is shown that  any faithful action of $\BS(1,2)$  without global fixed points is either of affine type or Conradian.}  (see Figure~\ref{fig:Conrad}). The following lemma formalizes the idea that for building a Conradian action one just need pick $\alpha\in \HR_+(\R)$ and \emph{any other} homeomorphism defined on a fundamental domain of $\alpha$ which is then {\em extended} to a homeomorphism of the line using the relation of $\BS(1,2)$. For the statement, we denote by $\HR_0(I)$ the group of homeomorphisms of an open subset $I\subset \R$ which are isotopic to the identity. Also, given a homeomorphism $h:I\to I$, we define its \emph{support} as the open subset $\supp(h)=\{x\in I\mid h(x)\neq x\}$.

\begin{figure}[t]
	\[
	\includegraphics[scale=1]{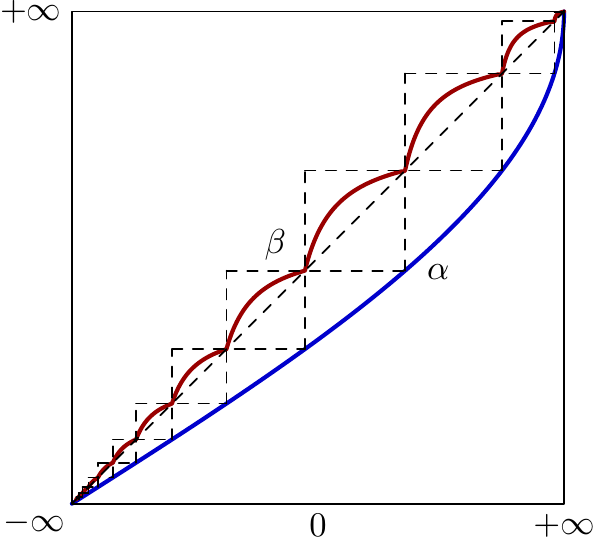}
	\]
	\caption{An example of Conradian action of $\BS(1,2)$ on the real line.}\label{fig:Conrad}
\end{figure}

\begin{lem} \label{lem 2}Given $f\in \HR_+(\R)$, an open set $I$ such that $f( I )\cap I=\emptyset$, and $h\in \HR_0(I)$, there exists a map $R_1:(f,I,h)\mapsto  g_h\in \HR_+(\R)$ such that \begin{enumerate}[(i)]
\item $g_h(x)=h(x)$ for every $x\in I$, \label{2i:1}
\item $g_h$  is the identity outside $ \bigcup_{n\in \Z} f^n(I)$,
\item $fg_hf^{-1}(x)=g_h^2(x)$ for every $x\in \R$.\label{ii:3}
\end{enumerate}
\end{lem}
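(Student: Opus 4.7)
The plan is to build $g_h$ piecewise on each iterate $J_n := f^n(I)$ for $n \in \Z$, extending by the identity on the complement $\R \setminus \bigcup_n J_n$. A preliminary observation is that the $J_n$ are pairwise disjoint: on each component $(a,b)$ of $I$, the hypothesis $f(I) \cap I = \emptyset$ forces either $f(b) \leq a$ or $f(a) \geq b$, and monotonicity of $f$ cascades iterates to that same side of $I$, preserving disjointness.

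The relation $f g_h f^{-1} = g_h^2$, restricted to $J_n$, reads
\[
(g_h|_{J_n})^2 = f \circ (g_h|_{J_{n-1}}) \circ f^{-1}.
\]
Starting from $g_h|_{J_0}=h$, this equation is used to define $g_h$ on the remaining iterates. For non-positive indices it gives the explicit formula
\[
g_h|_{J_{-k}} \;=\; f^{-k} \circ h^{2^k} \circ f^k \qquad (k \geq 0),
\]
which is well-defined without any choice. For positive indices instead, the relation forces $g_h|_{J_k}$ to be a square root of the already-built $\psi_k := f \circ g_h|_{J_{k-1}} \circ f^{-1} \in \HR_0(J_k)$. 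Extracting this square root is the main obstacle of the construction. To handle it, I would invoke the classical fact that every orientation-preserving homeomorphism of an open interval admits a topological square root: on each component $(\alpha,\beta)$ of $\supp(\psi_k)$, where $\psi_k$ has no fixed points (say $\psi_k>\mathrm{id}$), pick $x_0$ and an intermediate $y_0 \in (x_0, \psi_k(x_0))$, let $\phi$ be any increasing homeomorphism $[x_0,y_0] \to [y_0,\psi_k(x_0)]$, and extend by imposing $\phi \circ \psi_k = \psi_k \circ \phi$. Setting this root to the identity off $\supp(\psi_k)$ yields a homeomorphism of $J_k$ that is the identity near $\partial J_k$. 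Iterating on $k \geq 1$ completes the construction.

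It remains to check that the pieces patch together to a homeomorphism of $\R$ satisfying (i)--(iii). Continuity at each $\partial J_n$ follows because $h \in \HR_0(I)$ extends continuously by the identity across $\partial I$, and the maps built at the subsequent stages inherit the same property. Items (i) and (ii) are immediate from the definition. For (iii), pick $x \in J_n$; since $f^{-1}(x) \in J_{n-1}$, we get
\[
f g_h f^{-1}(x) \;=\; f\circ (g_h|_{J_{n-1}})\circ f^{-1}(x) \;=\; (g_h|_{J_n})^2(x) \;=\; g_h^2(x),
\]
using the defining relation on $J_n$ (direct computation for $n \leq 0$; the square-root property for $n \geq 1$). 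For $x \in \R \setminus \bigcup_n J_n$, both sides equal $x$, since $f$ permutes $\{J_n\}$ and $g_h$ is the identity outside this collection.
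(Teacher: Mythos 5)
Your construction has the same skeleton as the paper's proof: set $g_h=h$ on $I$, use the forced formula $f^{-k}h^{2^k}f^k$ on the backward iterates, take iterated square roots on the forward iterates (conjugating roots of $h$ forward, which is equivalent to your step-by-step extraction), and extend by the identity elsewhere. The one substantive divergence is the treatment of the square root. You invoke the classical existence statement with arbitrary choices (``pick $x_0$'', ``any increasing homeomorphism $\phi$''), whereas almost all of the paper's proof is devoted to pinning the root down: on each component of $\supp(h)$ it spreads Lebesgue measure from the fundamental domain $[x_0,h(x_0))$ to an $h$-invariant measure, conjugates $h$ to the unit translation and halves it. For the lemma as literally stated this does not matter much—properties (i)--(iii) concern each $h$ separately, so fixing your choices deterministically (say, affine $\phi$ and midpoints) already yields a map $R_1$—but be aware that the normalization is the actual content of the paper's argument: the induced operator $\widetilde{R_1}$ is later iterated in a Banach fixed-point scheme, and the contraction estimate \eqref{eq 2lip} compares $R_1$ evaluated at different inputs, a quantitative dependence on $h$ that an uncontrolled choice of roots need not provide.

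Two smaller points. Your extension recipe is incomplete as written: defining $\phi$ on $[x_0,y_0]$ and extending ``by imposing $\phi\circ\psi_k=\psi_k\circ\phi$'' only determines $\phi$ on the $\psi_k$-orbit of $[x_0,y_0]$, which misses the gaps such as $(y_0,\psi_k(x_0))$; one must first put $\phi:=\psi_k\circ\phi^{-1}$ on $[y_0,\psi_k(x_0)]$, so that $\phi^2=\psi_k$ holds on a full fundamental domain, and only then extend equivariantly. Also, your disjointness argument only handles the iterates of a single component of $I$: when $I$ is disconnected, $f$ may send a component into a gap between two components of $I$, and a later iterate can then meet a different component, so pairwise disjointness of the $f^n(I)$ does not follow from $f(I)\cap I=\emptyset$ alone. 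The paper asserts the same implication without proof, and in the actual application it holds for a different reason, namely \eqref{eq ping} gives $d^n(\R\setminus\mathcal I_0)\subset\mathcal I_0$ for all $n\neq 0$; so this is a caveat shared with the paper rather than a defect specific to your argument, but the one-line ``cascades to the same side'' justification should not be presented as a proof of the general claim.
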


\begin{proof}
We need to define $g_h$. Observe that the condition $f(I)\cap I=\emptyset$ implies that the images $f^n(I)$ are pairwise disjoint. On $f^{-n}(I)$ ($n>0$) we define $g_h$ as $f^{-n}h^{2^n}f^n$ and on $f^n(I)$ we want to define $g_h$ as $f^n\, \tworoot^{(n)}(h)f^{-n}$, where $\tworoot(h)$ is the square root of $h$ and $\tworoot^{(n)}=\tworoot\circ \cdots \circ \tworoot$ $n$-times.

Certainly, every $h\in \HR_+(\R)$ has a square root in $\HR_+(\R)$. For instance,  if $h$ has no fixed points, then it is conjugate to a translation and $\tworoot(h)$ can be then defined as half of the translation conjugated back, while for general $h$ one restricts the attention to connected components of the support of $h$ and repeats the argument locally. The problem is that the square root is highly non-unique (for instance, the translation to which a homeomorphism $h$ without fixed points is conjugate to is not unique), so $\tworoot$ is not well defined (so far). 

To give a precise definition of $\tworoot:\HR_0(I)\to \HR_0(I)$  (and hence a precise definition of $g_h$) we proceed as follows. For every connected component $C$  of the support of $h\in \HR_0(I)$, let $x_0\in C$ be its midpoint (this is well-defined, as from $f(I)\cap I=\emptyset$, every component $C$ is a bounded interval). Assume $h(x_0)>x_0$ (the case $h(x_0)<x_0$ is analogous). Then define a measure $\mu_0$ on $[x_0,h(x_0))$,  as the push forward of the Lebesgue measure $\mathsf{Leb}$ on $[0,1)$ by the unique affine map $A$ sending $[0,1)$ to $[x_0,h(x_0))$ bijectively. So $\mu_0(X)=\mathsf{Leb}(A^{-1}(X))$ for every Borel subset $X\subset [x_0,h(x_0))$. Then there is a unique way of extending $\mu_0$ to an $h$-invariant measure over $C$. The resulting measure $\mu_C$ is fully supported on $C$, gives finite mass to compact subsets of $C$, and has not atoms. We can now use $\mu_C$ to conjugate the restriction of $h$ to $C$ to the translation by 1 on the real line. Indeed if we let $F(x)=\mathsf{sign}(x-x_0)\mu([x_0,x))$, then $F h F^{-1}$ is the translation by 1. Then we can take the square root inside the group of translations and conjugate back. This is our preferred $\tworoot$.
\end{proof}

\subsection{Choosing $b$ and $d$} \label{sec b and d}
 Let $J_0$ and $\bar g:x\mapsto x+2$ be as in Example \ref{ex denjoy}, and let $\psi:\R\to (-1,1)$ be the homeomorphism given by the linear interpolation of 
\[\psi(n)=\left\{\begin{array}{cr} 0 & \text{if }n=0,\\ \sum_{i=1}^n 2^{-i} & \text{if }n>0, \\ -\sum_{i=1}^{-n} 2^{-i} & \text{if }n<0, \end{array}\right.\]
for $n\in \Z$. Let $\beta_0=\psi \circ \bar g \circ \psi^{-1}\in \HR_+(-1,1)$, and call $I_0$ the image of $J_0$ under $\psi$. Call $I_1=I_0+1$ and define $\beta_1\in \HR_+(0,2)$ by $\beta_1(x)=\beta_0(x-1)+1$. We note that $I_0$ and $I_1$ are disjoint intervals, one centered at $0$, and the other centered at $1$; we note also that the maps $\beta_0$ and $\beta_1$ are piecewise linear.
Finally, we extend $\beta_0$ and $\beta_1$ to homeomorphisms of the real line by imposing that they commute with the translation $T:x\mapsto x+2$. The resulting homeomorphisms are still denoted $\beta_0$ and $\beta_1$. We also let $\mathcal{I}_0=\bigcup_{n\in \Z} I_0+2n$ and $\mathcal{I}_1=\bigcup_{n\in \Z} I_1+2n$, which are disjoint open sets, and choose $N>0$ such that 
\begin{equation}\label{eq ping}\beta_0^{\pm N}(\R\setminus \mathcal{I}_1)\subseteq \mathcal{I}_1 \quad\text{and}\quad \beta_1^{\pm N}(\R\setminus \mathcal{I}_0)\subseteq \mathcal{I}_0.\end{equation}
By possibly enlarging $N$, we can (and will) also assume that for all $n\geq N$ we have 
\begin{equation}\label{eq clave}(\beta_i^{\pm n})'(x)\leq 1/2 \quad\text{for all $x\notin \mathcal{I}_{i+1}$,}\end{equation} 
where the indices are taken in $\Z/2\Z$. Note that by (\ref{eq ping}) and the ping-pong lemma, the group $\langle \beta_0^N,\beta_1^N\rangle$ is a free group inside $\HR_+(\R)$. We   let $b=\beta_0^N$ and $d=\beta_1^N$.

\subsection{Choosing $a$ and $c$} \label{sec a and c}

For $f$ and $g$ homeomorphisms of the line, we let
\[|f-g|_\infty=\sup_{x\in \R}|f(x)-g(x)|+\sup_{x\in \R}|f^{-1}(x)-g^{-1}(x)|.\]
This  ``norm'' may be infinite but in many cases it is bounded, so it defines a true distance. This happens for instance in restriction to the subgroup 
\[{\HR_{+,T}(\R)}=\{f\in \HR_+(\R)\mid T\circ f=f\circ T\},\]
where $T$ is a translation (which is our case). Endowed with $|\,\cdot\, |_\infty$, ${\HR_{+,T}(\R)}$ becomes a complete metric space, which makes it a Polish group (see \cite[Proposition I.2.4]{Herman}).
We will build $a$ and $c$ as the limit (in this topology) of converging sequences of homeomorphisms $(a_n)_{n\geq 0}$ and $(c_n)_{n\geq 0}$ respectively; for every $n\ge 0$, the homeomorphism $a_n$ (resp.~$c_n$) will be defined as a perturbation of $a_{n-1}$ (resp.~$c_{n-1}$). 
 We keep the notation from \S \ref{sec b and d}.

Let $\bar f$ and $\bar g$ be the homeomorphisms from Example \ref{ex denjoy} and $M$ its minimal set. Let $a_{-1}=\psi \circ \bar f \circ \psi^{-1}\in \HR_+(-1,1)$, and $c_{-1}\in \HR_+(0,2)$ be defined by $c_{-1}(x)=a_{-1}(x-1)+1$. We extend $a_{-1}$ and $c_{-1}$ inside ${\HR_{+,T}(\R)}$  by imposing that they commute with the translation $T:x\mapsto x+2$. We denote the resulting elements by $a_0$ and $c_0$ respectively. Observe that  $a_0ba_0^{-1}(x)=b^2(x)$ for all $x\in \R$  and in the same way  $c_0 d c_0^{-1}(x)=d^2(x)$ for all $x\in \R$.  Note also that $a_0$ is supported on $\R\setminus \mathcal{I}_0$ and $c_0$ is supported on $\R\setminus \mathcal{I}_1$ (because $\bar f$ is the identity on $J_0$).

\vsp
We now explain how to obtain $a_n$ and $c_n$ for $n\geq 1$. The construction of the sequences $(a_n)$ and $(c_n)$, as well as the proof of its convergence are independent and completely analogous upon exchanging the roles of $b$ and $d$ (and  replacing $0$ indices by $1$ indices). We only explain the case of $a_n$. Let $\mathcal M_0=\bigcup_{n\in \Z} \psi(M)+2n$. We will perform successive perturbations on $a_0$ on the complement of $\mathcal M_0$.

\begin{lem} \label{lem 1}
Let $T:\R\to\R$ denote the translation by $2$. There exists a map $R_2:h\mapsto a_h\in {\HR_{+,T}(\R)}$ defined on elements of $\HR_{+,T}(\R)$ supported on $\mathcal I_0$ with the following properties.
\begin{enumerate}[(i)]
\item $a_h(x)=h(x)$ for every $x\in\mathcal I _0$. \label{i:1}
\item  $a_h(x)=a_0(x)$ for $x\in \mathcal M_0$ and $ a_h b a_h^{-1}(x)=  b^2(x)$ for every $x\in \R$. \label{i:2}
\item $R_2$ is an isometry with respect to the $|\,\cdot\, |_\infty$-norm: for any two $h,h'\in \HR_{+,T}(\R)$ supported on $\mathcal I_0$, one has $| a_h-a_{h'}|_\infty= |h-h'|_\infty$. \label{i:3}
\end{enumerate}
\end{lem}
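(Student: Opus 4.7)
The plan is to define $a_h = a_0 \circ \phi_h$, where $\phi_h$ is the $T$-equivariant homeomorphism of $\R$ that commutes with $b$, coincides with $h$ on $\mathcal{I}_0$, and is the identity outside $\bigcup_{n \in \Z} b^n(\mathcal{I}_0)$. Equivalently, on each interval $b^n(\mathcal{I}_0)$ one sets $a_h(x) = b^{2n}(h(b^{-n}(x)))$, while $a_h = a_0$ on the complement of the $b$-orbit of $\mathcal{I}_0$; the two formulas agree because $a_0$ is the identity on $\mathcal{I}_0$ and $h$ preserves $\mathcal{I}_0$.

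First I check that $a_h$ is a well-defined orientation-preserving homeomorphism in $\HR_{+,T}(\R)$. The sets $b^n(\mathcal{I}_0)$ are pairwise disjoint (and disjoint from $\mathcal{M}_0$, since $M$ misses the $\bar g$-orbit of $J_0$), so the piecewise definition is unambiguous. Continuity at the boundary of each $b^n(\mathcal{I}_0)$ is automatic because $h$ is the identity on $\partial \mathcal{I}_0$: at $b^n(p)$ with $p \in \partial \mathcal{I}_0$, both formulas return $b^{2n}(p)$. The $T$-equivariance follows because $h$ and $b$ commute with $T$.

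Properties (i) and (ii) are then short to verify. Property (i) is just the $n=0$ case. For (ii), the equality $a_h = a_0$ on $\mathcal{M}_0$ is built into the definition, while
\[a_h b a_h^{-1} = a_0 \, (\phi_h b \phi_h^{-1}) \, a_0^{-1} = a_0 b a_0^{-1} = b^2,\]
using that $\phi_h$ commutes with $b$ and that $a_0$ was constructed to conjugate $b$ to $b^2$.

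The main content, and the step I expect to be the most delicate, is the isometry (iii). Since $a_h = a_{h'}$ off the $b$-orbit of $\mathcal{I}_0$, the supremum $\sup_\R |a_h - a_{h'}|$ is controlled by the differences on each $b^n(\mathcal{I}_0)$: for $y = b^{-n}(x) \in \mathcal{I}_0$, this equals $|b^{2n}(h(y)) - b^{2n}(h'(y))|$. The $n = 0$ term is exactly $|h(y) - h'(y)|$, giving a contribution of $\sup_\R |h - h'|$. For $n \neq 0$, since both $h(y)$ and $h'(y)$ lie in the same component of $\mathcal{I}_0 \subset \R \setminus \mathcal{I}_1$, the key estimate \eqref{eq clave} combined with the Mean Value Theorem forces $|b^{2n}(h(y)) - b^{2n}(h'(y))| \leq \tfrac{1}{2} |h(y) - h'(y)|$, so these contributions are strictly dominated by the $n = 0$ term, yielding $\sup_\R |a_h - a_{h'}| = \sup_\R |h - h'|$. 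An entirely analogous calculation for $a_h^{-1}$ on each $b^{2n}(\mathcal{I}_0)$, with $(b^n)'$ replacing $(b^{2n})'$, gives $\sup_\R |a_h^{-1} - a_{h'}^{-1}| = \sup_\R |h^{-1} - h'^{-1}|$. Summing the two yields the desired identity $|a_h - a_{h'}|_\infty = |h - h'|_\infty$. The essential ingredients are the placement of $I_0$ and $I_1$ in disjoint periods and the contraction built into the choice of $N$ in Section \ref{sec b and d}.
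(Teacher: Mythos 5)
Your construction is exactly the paper's: your $\phi_h$ is the infinite product $h_1=\prod_{n\in\Z}b^nhb^{-n}$ of the paper's proof, $a_h=a_0\phi_h$, and items (i)--(iii) are verified the same way, with the isometry coming from the observation that by \eqref{eq clave} the supremum over the $b$-translates of $\mathcal I_0$ is attained at $n=0$ (and likewise for inverses). So the proposal is correct and takes essentially the same route as the paper.
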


\begin{proof}	
Suppose that $h\in {\HR_{+,T}(\R)}$ is supported on $\mathcal I_0$. Since $\mathcal I_0$ is contained in a fundamental domain of $b$ (i.e.~$b^n(\mathcal I_0)\cap \mathcal I_0=\emptyset$ for $n\neq 0$),
the infinite composition $h_1:=\prod_{n\in \Z}b^nhb^{-n}$ is a composition of homeomorphisms with pairwise disjoint support, all commuting with $T$. 
Hence $h_1$ is a well-defined element in $\HR_{+,T}(\R)$, supported on $\bigcup_{n\in\Z} b^n (\mathcal I_0)$, which furthermore commutes with $b$. We can then define $R_2(h)=a_h=a_0h_1$ which satisfies item \eqref{i:1}.

For \eqref{i:2} we observe that $h_1(x)=x$ for all $x\in \mathcal M_0$ since $b^n(\mathcal I_0)$ and $\mathcal M_0$ are disjoint (see the end of Example \ref{ex denjoy}) and $h_1$ is supported on $\bigcup b^n(\mathcal I_0)$. In particular $a_h(x)=a_0(x)$ for all $x\in \mathcal M_0$. We can also check that $a_hb(x)=a_0h_1b(x)=a_0bh_1(x)=b^2a_0h_1(x)=b^2a_h(x)$ for all $x\in \R$. 

Finally, to show \eqref{i:3} we note that
\begin{equation}\label{eq:norm1}
	\sup_{x\in\R}|a_0h_1(x)-a_0h_1'(x)| =\sup_{n\in\Z}\sup_{x\in \mathcal I_0}|a_0b^nh(x)-a_0b^nh'(x)|=\sup_{n\in\Z}\sup_{x\in \mathcal I_0}|b^{2n}a_0h(x)-b^{2n}a_0h'(x)|.
\end{equation}
Further, for $x\in\mathcal I_0$, the images  $h(x)$ and $h'(x)$ belong to $\mathcal I_0$, on which $a_0$ is the identity and, as $\mathcal I_0$ and $\mathcal I_1$ are disjoint, condition \eqref{eq clave} implies that the supremum in the last term is achieved for $n=0$.
After these considerations, equality \eqref{eq:norm1} reduces to
\begin{equation*}
\sup_{x\in\R}|a_0h_1(x)-a_0h_1'(x)| =\sup_{n\in\Z}\sup_{x\in \mathcal I_0}|b^{2n}h(x)-b^{2n}h'(x)|=\sup_{x\in \mathcal I_0}|h(x)-h'(x)|.
\end{equation*}
Similarly we control the difference of inverses: we have
\begin{equation*}
\sup_{x\in\R}|(a_0h_1)^{-1}(x)-(a_0h_1')^{-1}(x)|=  \sup_{x\in\R}|(h_1)^{-1}(x)-(h_1') ^{-1}(x)| =\sup_{n\in\Z}\sup_{x\in \mathcal I_0}|b^n(h)^{-1}(x)-b^n(h')^{-1}(x)|
\end{equation*}
which, by \eqref{eq clave},  equals $\sup_{x\in\mathcal I_0}|(h)^{-1}(x)-(h')^{-1}(x)|$.
Putting the two pieces together, we obtain the equality $| a_h-a_{h'}|_\infty= |h-h'|_\infty$.
\end{proof}

We will use sequentially Lemma \ref{lem 2} and Lemma \ref{lem 1} to perturb $a_0$ outside $\mathcal M_0$. To this extent we let
\[\Dom_0=\{f\in {\HR_{+,T}(\R)})\mid  f(x)=a_0(x) \text{ for every $x\in \mathcal M_0$}\}.\]
Observe that $\Dom_0$ is a closed subset of ${\HR_{+,T}(\R)}$  and for $f\in \Dom_0$, we have that $ f(\mathcal I _0)=\mathcal I_0$, and $fbf^{-1}(x)=b^2(x)$ for every $x\in \mathcal M_0$.  In particular, for $f\in \Dom_0$, we can define $\widetilde{R_2}(f)=R_2(f\restriction_{\mathcal I_0})$, where $f\restriction_{\mathcal I_0}$ denotes the restriction of $f$ to $\mathcal I_0$, extended as to be the identity outside $\mathcal I_0$. Certainly $\widetilde{R_2}$ is an idempotent of $\Dom_0$ (i.e.~$\widetilde{R_2}^2=\widetilde{R_2}$) and, from Lemma \ref{lem 1}.\ref{i:3},  it also satisfies
\begin{equation}\label{eq 1lip}|\widetilde{R_2}(f)-\widetilde{R_2}(g)|_\infty= |f\restriction_{\mathcal I_0} - g\restriction_{\mathcal I_0}|_\infty.\end{equation} 
Note also that by (\ref{eq ping}), $d^n(\R\setminus \mathcal I_0)\subset \mathcal I_0$ for all $n\neq 0$ (and hence $d^n(\mathcal M_0) \subset \mathcal I_0$ for all $n\neq 0$). In particular, for $f\in \Dom_0$ we can apply $R_1$ from Lemma \ref{lem 2} to define $\widetilde{R_1}(f)=R_1(d,  \R\setminus\mathcal I_0 ,f\restriction_{\R\setminus \mathcal I_0} )$. Certainly $\widetilde{R_1}$ is also an idempotent of $\Dom_0$ which, by (\ref{eq clave}),  satisfies that
\begin{equation}\label{eq 2lip}|\widetilde{R_1}(f)\restriction_{\mathcal I_0}-\widetilde{R_1}(g)\restriction_{\mathcal I_0}|_\infty\leq \tfrac{1}{2}\;  |f\restriction_{\R\setminus \mathcal I_0} - g \restriction_{\R\setminus\mathcal I_0}|_\infty.\end{equation}
Suppose that $a_n$ is defined for $n\leq 2k$ ($k\geq 0$). We define $a_{2k+1}=\widetilde{R_1}(a_{2k})$ and $a_{2k+2}=\widetilde{R_2}(a_{2k+1})$.

\begin{claim*}
	The sequence $a_n$ converges in $\HR_{+,T}(\R)$.
\end{claim*}

Before giving the proof note that if $a_n$ converges to $a$, then $a$ is both $\widetilde{R_2}$ and $\widetilde{R_1}$ invariant so, by Lemma~\ref{lem 2}.\ref{ii:3} and Lemma \ref{lem 1}.\ref{i:2}, it satisfies $aba^{-1}=b^2$ and $dad^{-1}=a^2$ over $\R$ as desired.

\begin{proof}[Proof of Claim] First observe that, from (\ref{eq 1lip}) and (\ref{eq 2lip}), it follows that $\widetilde{R_2} \widetilde{R_1}$ and $\widetilde{R_1} \widetilde{R_2}$ are contractions (by $1/2$) of the complete metric space $(\HR_{+,T}(\R),|\,\cdot\,|_\infty)$, so by the classical Banach Fixed Point Theorem, both of them have unique (attracting) fixed points in $\Dom_0\subset\HR_{+,T}(\R)$; call them $p=\widetilde{R_2}\widetilde{R_1}(p)$ and $q=\widetilde{R_1}\widetilde{R_2}(q)$. Observe that since $\widetilde R_1$ and $\widetilde R_2$ are idempotents, if follows by uniqueness that $\widetilde R_1(q)=q$ and $\widetilde R_2(p)=p$.  We need to show that $p=q$.
 
By the definition of $\Dom_0$, we have $p(x)=q(x)=a_0(x)$ for all  $x\in \mathcal M_0$ and hence \[|p\restriction_{\R\setminus \mathcal I_0} - q\restriction_{\R\setminus \mathcal I_0}|_\infty\leq |J|,\]
where $J$ is the largest connected component of the complement of $\mathcal M_0\cup \mathcal I_0$.
In particular, applying $\widetilde{R_1}$, we get from (\ref{eq 2lip}) that
 \[|\widetilde{R_1}(p)\restriction_{\mathcal I_0} - q\restriction_{\mathcal I_0}|_\infty \leq \tfrac12|J|,\]
and then, applying $\widetilde{R_2}$, (\ref{eq 1lip}) gives 
 \[|p - \widetilde{R_2}(q)|_\infty\leq \tfrac12|J|.\]
If we now repeat $k$ times this process we get 
\[|\widetilde{R_1}(p)\restriction_{\mathcal I_0} - q\restriction_{\mathcal I_0}|_\infty \leq |J|/2^k\quad \text{and}\quad |p - \widetilde{R_2}(q)|_\infty,\leq |J|/2^k.\]
It follows that $p=\widetilde{R_2}(q)$ and hence $\widetilde{R_1}(p)=q$.  But, respectively, this implies that $p=q$   over $ \mathcal I_0$ (see Lemma \ref{lem 1}.\ref{i:1}) and over $\R\setminus \mathcal I_0$ (see Lemma \ref{lem 2}.\ref{2i:1}). So $p=q$, and we declare $a=p=q$ as needed.
\end{proof}

\begin{rem}\label{rem projection} Observe that the four homeomorphisms $a,b,c,d$ all commute with the translation $T$, so they define homeomorphisms of the circle $\mathbb S^1\cong \R/_{T(x)\sim x}$, and hence a representation  of $H$ into $\HR(\mathbb S^1)$. Further, since $c$ and $d$ do not share fixed points, the resulting action on the circle is without global fixed points. See Figure \ref{fig:Higman}.

\end{rem}
\begin{figure}[t]
	\[
	\includegraphics[scale = 1]{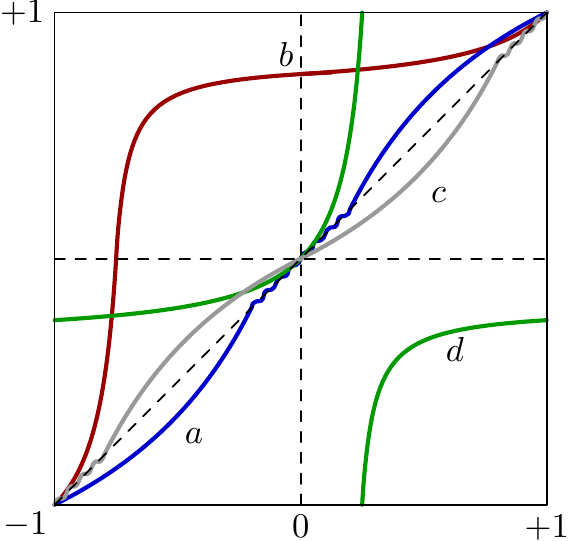}
	\]
	\caption{The action of Higman's group on the circle $\mathbb S^1\cong\R/\langle T\rangle$.}\label{fig:Higman}
\end{figure}

\section{A faithful action}
\label{sec faithful}

In the action $\varphi:H\to {\HR_{+,T}(\R)}$ built in Section \ref{sec construction}, the subgroup $F=\langle b,d\rangle$ acts as a lift of a ping pong action on the circle (with parabolic elements) so $\varphi$ is an embedding in restriction to $\langle b,d\rangle$. This remark, together with Kurosh Theorem \cite{serre}, give the following:

\begin{lem}\label{l:kernel}
	The kernel of the action $\varphi$ is a free group and hence is left-orderable.
\end{lem}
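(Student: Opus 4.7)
The plan is to decompose $H$ twice as an amalgamated free product and apply Bass--Serre theory to $K=\ker(\varphi)$ at each stage. Write $H=H_1*_F H_2$ with $F=\langle b,d\rangle$ and
\[H_1=\langle b,c,d\mid bcb^{-1}=c^2,\,cdc^{-1}=d^2\rangle,\qquad H_2=\langle d,a,b\mid dad^{-1}=a^2,\,aba^{-1}=b^2\rangle.\]
Each $H_i$ further splits as an amalgamated product $\BS(1,2)*_{\Z}\BS(1,2)$ over a cyclic subgroup (namely $\langle c\rangle$ inside $H_1$ and $\langle a\rangle$ inside $H_2$), and a standard normal-form argument in each amalgam shows that $F=\langle b,d\rangle$ embeds in both $H_1$ and $H_2$ as a free group of rank $2$.

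Since $\varphi$ is injective on $F$ (as noted in the sentence preceding the lemma) and $K$ is normal in $H$, $K$ meets every conjugate of $F$ trivially. Thus $K$ acts on the Bass--Serre tree of the splitting $H=H_1*_F H_2$ with trivial edge stabilizers, and by Bass--Serre theory \cite{serre} it decomposes as a free product of a free group with conjugates of $K\cap H_i$, $i=1,2$. The same argument applies inside each $H_i=\BS(1,2)*_{\Z}\BS(1,2)$: the cyclic amalgamating subgroup is mapped injectively by $\varphi$, since each of $a,b,c,d$ is a nontrivial element of the torsion-free group $\HR_+(\R)$; hence $K\cap H_i$ is again a free product of a free group with conjugates of $K\cap V_j$, where $V_j$ ranges over the four canonical Baumslag--Solitar subgroups $\langle a,b\rangle,\langle b,c\rangle,\langle c,d\rangle,\langle d,a\rangle$ of $H$.

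The crucial remaining step is to show that $\varphi$ is injective on each $V_j\cong\BS(1,2)$. The image $\varphi(V_j)$ is a torsion-free quotient of $\BS(1,2)$, and the only torsion-free quotients of $\BS(1,2)$ are $\BS(1,2)$, $\Z$, and $\{1\}$; the latter two reduce the Baumslag--Solitar relation $xyx^{-1}=y^2$ to $y=y^2$, forcing $y$ (one of the generators $a,b,c,d$) to be trivial, contradicting the explicit construction. Therefore $K\cap V_j=\{1\}$, and combining the two Bass--Serre decompositions shows that $K$ is a free product of trivial groups with a free group---hence free, and thus left-orderable. The main subtle point is the identification of the right pair of nested splittings that allow the kernel to be peeled off with trivial stabilizers at each stage.
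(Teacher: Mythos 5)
Your proof is correct and takes essentially the same route as the paper: the same two-level decomposition $H=H_1*_{\langle b,d\rangle}H_2$ with each $H_i=\BS(1,2)*_{\Z}\BS(1,2)$, triviality of the kernel's intersection with conjugates of the edge and vertex groups, and the subgroup theorem for amalgams to conclude freeness. Your justification of faithfulness on the four $\BS(1,2)$ subgroups via the classification of torsion-free quotients is in fact a slightly more careful version of the paper's parenthetical remark that every proper quotient of $\BS(1,2)$ is abelian (which is literally false, e.g.\ $\Z/3\rtimes\Z$, but harmless here precisely because the image lies in the torsion-free group $\HR_+(\R)$, as you observe).
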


\begin{proof}
The group $H$ is the amalgamated product 
\begin{equation}\label{eq amalgama} H= \langle d,a,b \mid dad^{-1}=a^2\; ,\, aba^{-1}=b^2 \rangle*_{\langle b , d\rangle}\langle b,c,d\mid  bcb^{-1}=c^2\; ,\, cdc^{-1}=d^2 \rangle.\end{equation}
By the preceding remark, $\ker(\varphi)$ has trivial intersection with the edge group $F=\langle b,d\rangle$.
Therefore we can apply Kurosh Theorem to $\ker(\varphi)$ in the amalgamated product \eqref{eq amalgama} (in the case of a normal subgroup, it is enough to look at the intersection with the edge group): it gives that $\ker(\varphi)$ is the free product of a free group with intersections of $\ker(\varphi)$ with conjugates of the vertex groups 
\[H_1=\langle d,a,b \mid dad^{-1}=a^2\; ,\, aba^{-1}=b^2 \rangle\quad\text{and}\quad H_2=\langle b,c,d\mid  bcb^{-1}=c^2\; ,\, cdc^{-1}=d^2 \rangle.\]
It is then enough to prove that the intersection of $\ker(\varphi)$ with every conjugate of $H_1$ and $H_2$ is also a free group. Observe that by normality of $\ker(\varphi)$, it is enough to prove that the intersections $\ker(\varphi)\cap H_1$ and $\ker(\varphi)\cap H_2$ are free.
For this, we repeat nearly the same argument.

Indeed, the group $H_1= \langle d,a,b \mid dad^{-1}=a^2\; ,\, aba^{-1}=b^2 \rangle$ is also an amalgamated product
\begin{equation}\label{eq amalgama2} H_1= \langle d,a\mid dad^{-1}=a^2\rangle *_{\langle a\rangle} \langle a,b \mid  aba^{-1}=b^2 \rangle,\end{equation}
and the two vertex groups $\langle d,a\mid dad^{-1}=a^2\rangle$ and $\langle a,b \mid  aba^{-1}=b^2 \rangle$ are two copies of $\BS(1,2)$ which act faithfully by construction of the action (every proper quotient of $\BS(1,2)$ is abelian, and certainly this is not the case for this action). Therefore the intersection of $\ker(\varphi)$ with the vertex group in $H_1$ is trivial and this implies that $\ker(\varphi)\cap H_1$ is a free group. The same argument works for the intersection with $H_2$. This shows that $\ker(\varphi)$ is a free group (potentially trivial).

Finally, the fact that free groups are left-orderable is well known and can be found in \cite{Clay-Rolfsen} or \cite{ghys}.
\end{proof}

\begin{proof}[Proof of Theorem~\ref{t:LO}]
As a consequence, the group $H$ fits into the short exact sequence
\[
1\to \ker(\varphi)\to H\to \varphi(H)\to 1,
\]
where $\ker(\varphi)$ and $\varphi(H)$ are both left-orderable after Lemma~\ref{l:kernel}. This implies that $H$ is left-orderable as well:
denote by $\preceq_\varphi$ and $\preceq_\ker$ the left-invariant orders on $\varphi(H)$ and $\ker(\varphi)$ respectively; then define the order $\preceq$ on $H$ by declaring
\[id\preceq h\quad\text{if and only if}\quad\begin{cases}
h\in\ker(\varphi)\text{ and }id\preceq_\ker h,\text{ or}\\
h\notin\ker(\varphi)\text{ and }\varphi(id)\preceq_\varphi \varphi(h),
\end{cases}\]
and extending it so that it is left-invariant (declaring $h\preceq g$ if and only if $id\preceq h^{-1}g$; the fact that $\preceq_\varphi$ and $\preceq_\ker$ are left-invariant implies that the definition is consistent, see \cite{Clay-Rolfsen} for more details). 
Finally, every countable left-orderable group is isomorphic to a subgroup of $\HR_+(\R)$  (see \cite{Clay-Rolfsen} or \cite{ghys}), therefore $H$ admits faithful actions on the real line.
\end{proof}

We can also show the analogous result for the circle.

\begin{prop} \label{prop circle action} $H$ admits a faithful action on the circle without global fixed points.

\end{prop}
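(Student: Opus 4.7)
The plan is to modify the circle action $\bar\varphi:H\to\HR(\mathbb S^1)$ of Remark~\ref{rem projection}, which has no global fixed points but may fail to be faithful, by a Denjoy-type blow-up that inserts a faithful action of its kernel along a single orbit. This is the circular analog of the lexicographic trick used for Theorem~\ref{t:LO}.

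First, I would analyze the kernel $K:=\ker\bar\varphi$. An element $h\in H$ lies in $K$ precisely when $\varphi(h)$ descends to the identity of $\mathbb S^1=\R/\langle T\rangle$, i.e.\ $\varphi(h)\in\langle T\rangle$, giving the short exact sequence
\[
1\to\ker(\varphi)\to K\to \varphi(H)\cap\langle T\rangle\to 1.
\]
Since $\ker(\varphi)$ is free by Lemma~\ref{l:kernel} and $\varphi(H)\cap\langle T\rangle$ is cyclic, the same type of extension argument as in the proof of Theorem~\ref{t:LO} shows that $K$ is left-orderable; hence it admits a faithful action $\rho:K\to\HR_+([0,1])$.

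Next, I would pick a point $p\in\mathbb S^1$ whose stabilizer under $\bar\varphi$ is precisely $K$: since $H/K$ is countable and acts faithfully on $\mathbb S^1$, the set of points fixed by some nontrivial element of $H/K$ is meager, so a Baire argument produces such a $p$. The orbit $O=\bar\varphi(H)\cdot p$ is then in bijection with $H/K$. I would blow up $O$ by replacing each $q\in O$ by a closed interval $I_q$ of length $\ell_q>0$, with $\sum_q\ell_q<\infty$ and $\ell_{q_n}\to 0$ whenever $q_n$ accumulates outside $O$; the result is again homeomorphic to $\mathbb S^1$. Fixing coset representatives $\{h_\alpha\}_{\alpha\in H/K}$ with $h_{[e]}=e$ defines a cocycle $k:H\times(H/K)\to K$ via $hh_\alpha=h_{h\alpha}\,k(h,\alpha)$. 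After identifying each $I_{\bar\varphi(h_\alpha)(p)}$ with a fixed model interval $I$, I would define $\psi:H\to\HR(\mathbb S^1)$ to coincide with the continuous extension of $\bar\varphi(h)$ off the blown-up intervals, and to send $I_{\bar\varphi(h_\alpha)(p)}$ to $I_{\bar\varphi(hh_\alpha)(p)}$ through the identifications with $I$ composed with $\rho(k(h,\alpha))$.

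Finally, I would verify the four properties: (i) the cocycle identity makes $\psi$ a homomorphism; (ii) the summability and decay of the $\ell_q$ make each $\psi(h)$ a homeomorphism; (iii) $\psi$ is faithful, since $\psi(h)=\mathrm{id}$ forces $\bar\varphi(h)=\mathrm{id}$, so $h\in K$, and then $\psi(h)|_{I_p}=\rho(h)=\mathrm{id}$ forces $h=e$; (iv) $\psi$ has no global fixed point, because no point in the complement of the blown-up intervals can be fixed by all of $H$ (as $\bar\varphi$ has none) and no point of $I_q$ can be fixed by every $h\in H$ (as $\mathrm{Stab}_H(p)=K\neq H$). The main obstacle is (ii): choosing the interval lengths $\ell_q$ so that $\psi(h)$ extends continuously across the endpoints of every blown-up interval, which requires a careful dynamical choice in the spirit of the weights $\kappa/2^{2^m}$ used in Example~\ref{ex denjoy}.
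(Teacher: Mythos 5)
Your overall strategy (blow up an orbit of the non-faithful circle action $\bar\varphi$ of Remark~\ref{rem projection} and insert a faithful action of the kernel, i.e.\ a dynamical lexicographic extension) is the hands-on version of what the paper does abstractly with circular orders, and the cocycle bookkeeping, the homomorphism check, and the faithfulness/no-fixed-point checks are fine. But there is a genuine gap at the step where you produce a point $p\in\mathbb S^1$ whose stabilizer is \emph{exactly} $K=\ker\bar\varphi$. Your Baire argument assumes that for each nontrivial $g\in\bar\varphi(H)$ the fixed-point set $\Fix(g)$ is nowhere dense, but a nontrivial circle homeomorphism can be the identity on an arc, so these sets may have interior, and a countable union of them can cover $\mathbb S^1$. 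This is not a fussy point: there are countable groups acting faithfully on $\mathbb S^1$ for which \emph{every} point has nontrivial stabilizer (Thompson's group $T$ is the standard example, since it contains nontrivial elements supported in arbitrarily small arcs), and you use nothing specific about $\bar\varphi(H)$ to rule this behaviour out. So the existence of $p$ with $\mathrm{Stab}_H(p)=K$ is unjustified as stated. Conversely, the issue you flag as the ``main obstacle'' — choosing the lengths $\ell_q$ so that $\psi(h)$ is continuous — is not an obstacle at all: any summable choice of lengths works, because the map you define is a bijection of the blown-up circle preserving the cyclic order (it is a homeomorphism on each inserted interval and induces the homeomorphism $\bar\varphi(h)$ on the collapsed circle), and a cyclic-order-preserving bijection of the circle is automatically a homeomorphism.

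The gap can be repaired within your framework: take \emph{any} point $p$ and let $S=\{h\in H\mid \bar\varphi(h)(p)=p\}$. Then $S/K$ embeds in the group of circle homeomorphisms fixing $p$, hence acts faithfully on a line and is left-orderable, and $K$ is left-orderable as you showed; so $S$ is left-orderable and admits a faithful action $\rho$ on $[0,1]$. Running your insertion with intervals indexed by $H/S$ and cocycle valued in $S$ gives faithfulness (if $\psi(h)=\mathrm{id}$ then $h\in K\subseteq S$ and $\psi(h)\restriction_{I_p}=\rho(h)$ forces $h=e$) and no global fixed point (since $S\neq H$, as $\bar\varphi$ has no global fixed point). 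By comparison, the paper avoids all of this by quoting two order-theoretic facts: a countable group acts faithfully on $\mathbb S^1$ iff it is circularly left-orderable, and an extension of a circularly orderable group by a left-orderable group is circularly orderable (Calegari), with the absence of global fixed points preserved by the Mann--Rivas realization; your construction is essentially the dynamical proof of that extension lemma, and once corrected it buys a concrete action rather than an appeal to those results.
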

\begin{proof}[Scketch of proof]
The proof follows the same lines as for Theorem \ref{t:LO}. \begin{enumerate}
\item For countable groups, acting faithfully on the circle by orientation preserving homeomorphisms is equivalent to admitting a circular order invariant under left-multiplications \cite{calegari, mann-rivas}.

\item If $1\to L\to G\to C\to 1$, where $C$ is a circularly left-orderable group and $L$ is a left-orderable group, then $G$ is circularly left-orderable  \cite{calegari}. 
\end{enumerate}
In our case we have that $1\to \ker(\varphi)\to H\to \overline{\varphi}(H)\to 1$, where $\overline \varphi:H\to \HR(\RS^1)$ is the projection of $\varphi$ to $\RS^1\cong \R_{T(x)\sim x}$, see Remark \ref{rem projection}.  Since $\overline \varphi (H)$ acts without global fixed points, the same is true for actions coming from circular orders obtained from the preceding short exact sequence \cite{mann-rivas}. \end{proof}

\section{Many inequivalent actions}
\label{sec quasi morphisms}

For the construction of $\varphi$ in Section \ref{sec construction}, the initial point was the choice of $b$ and $d$ starting from $T$-periodic extensions of  $\beta_0\in\HR_+([-1,1])$ and $\beta_1\in \HR_+([0,2])$. It is possible to get an analogous construction of $\varphi$ by imposing milder periodicity conditions to the extensions of $\beta_0$ and $\beta_1$ to the real line. Concretely, for arbitrary  sequences $(\epsilon_n)$ and $(\delta_n)\in \{-1,+1\}^\Z$, we can consider extensions of $\beta_0$ and $\beta_1$ verifying 
\[\begin{array}{lr}
\beta_0(T^n(x))=T^n(\beta_0^{\epsilon_n}(x)) & \text{for every }n\in\Z\setminus\{0\}\text{ and every }x\in [-1,1],\\
\beta_1(T^n(x))=T^n(\beta_1^{\delta_n}(x)) & \text{for every }n\in\Z\setminus \{0\}\text{ and every }x\in [0,2].\\
\end{array}\]
The effect in the corresponding choice of the new $b=\beta_0^N$ and $d=\beta_1^N$ is that, locally, they could be switched to the inverse of the old $b$ and $d$ (much in the spirit of \cite{hyde-lodha}). This new $b$ and $d$ (and hence the new $a$ and $c$) commute with a translation (and hence project to homeomorphisms of the circle) if and only if $(\epsilon_n)$ and $(\delta_n)$ have a common period. Moreover, any two distinct choices of sequences (which are not shift-equivalent) give actions which are not semi-conjugate one to another.

\section{Smooth actions}
\label{sec smooth}

In this section we give the  proof of Theorem~\ref{t:noC1}. 
We will first establish the result for actions on the real line.

\begin{prop}\label{p:noC1}
Every representation $\rho:H\to \Diff(\R)$ is trivial. 
\end{prop}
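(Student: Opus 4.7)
The plan is to show that every $C^1$-representation $\rho:H\to \Diff(\R)$ must admit a global fixed point. Once such a point $p$ is secured, the proposition follows by the argument already sketched in the introduction: the derivative map $h\mapsto \log \rho(h)'(p)$ is a homomorphism $H\to \R$, which is trivial because $H$ is perfect, so $\rho(h)'(p)=1$ for every $h\in H$; Thurston's Stability Theorem then forces a surjection $\rho(H)\twoheadrightarrow \Z$ unless $\rho$ is trivial, and again perfection of $H$ rules out the former.

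To locate the global fixed point I would exploit the four overlapping copies of Baumslag--Solitar inside $H$, namely $B_i:=\langle a_i,a_{i+1}\rangle\cong \BS(1,2)$ with relation $a_ia_{i+1}a_i^{-1}=a_{i+1}^2$, for $i\in \Z/4\Z$. Applying the main $C^1$-rigidity result of \cite{BMNR} to each $B_i$ should yield, for every $i$, an inclusion
\[\Fix(\rho(a_{i+1}))\subseteq \Fix(\rho(a_i)),\]
the key point being that the Conradian-type configurations---in which the parabolic generator $\beta$ fixes a fundamental domain of a fixed-point-free hyperbolic generator $\alpha$---are incompatible with $C^1$-regularity. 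Chaining the four inclusions cyclically collapses them to a common set
\[F:=\Fix(\rho(a_0))=\Fix(\rho(a_1))=\Fix(\rho(a_2))=\Fix(\rho(a_3)).\]
If $F\neq\emptyset$, any point of $F$ is the desired global fixed point and we are done.

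The main obstacle is the remaining case $F=\emptyset$, in which every $\rho(a_i)$ acts on $\R$ without fixed points, so each is topologically conjugate to a translation and carries a well-defined direction of displacement. I would tackle it by combining the topological combinatorics of such fixed-point-free maps---in particular the constraints that the BS relation $a_ia_{i+1}a_i^{-1}=a_{i+1}^2$ imposes on a pair of fixed-point-free orientation-preserving homeomorphisms of $\R$---with the $C^1$-rigidity of $\BS(1,2)$-actions, so as to derive incompatible dilatation data on the $\rho(a_i)$ when the four relations are threaded around the cycle $i\in \Z/4\Z$ (for instance, a multiplicative derivative cocycle around the four-cycle that would have to be simultaneously trivial and non-trivial). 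This case is precisely where the $C^1$-hypothesis is genuinely needed, since the topological construction of Section~\ref{sec construction} produces parabolic elements $b,d$ that are fixed-point-free; the plan is to show that the $C^1$ category forbids the symmetric scenario in which \emph{all four} generators are simultaneously fixed-point-free.
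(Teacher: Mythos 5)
Your reduction (find a global fixed point, then use perfectness, the trivial derivative homomorphism and Thurston stability) matches the paper's intended endgame, but the step you rely on to produce the fixed point has a genuine gap. The inclusion $\Fix(\rho(a_{i+1}))\subseteq\Fix(\rho(a_i))$ is not a consequence of \cite{BMNR}, and your stated reason --- that Conradian-type configurations are incompatible with $C^1$-regularity --- is false on the line: the rigidity of \cite{BMNR} concerns \emph{compact} intervals, whereas on $\R$ the distortion that obstructs $C^1$ Conradian actions can escape to infinity. Concretely, let $\alpha\colon x\mapsto x+1$, let $\phi^t$ be the flow of a smooth vector field supported in $(0,1)$ and infinitely flat at the endpoints, and define $\beta$ on $[n,n+1]$ as the conjugate by $x\mapsto x+n$ of $\phi^{2^{-n}}$; then $\alpha\beta\alpha^{-1}=\beta^2$, the action is faithful and $C^\infty$, yet $\beta$ fixes every integer while $\alpha$ fixes nothing, so $\Fix(\beta)\not\subseteq\Fix(\alpha)$. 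Hence nothing like your inclusions can be extracted from a single relation $a_ia_{i+1}a_i^{-1}=a_{i+1}^2$ plus $C^1$-regularity; one must use the interplay of all four relations, and this is exactly where the paper's work lies. Its Lemma \ref{l:noC1} is purely topological: in any non-trivial action of $H$ by homeomorphisms of $\R$, the fixed-point set of each generator accumulates at $\pm\infty$, and some $\rho(a_i)$ moves a component of $\supp(\rho(a_{i+1}))$ off itself; this produces a \emph{compact} interval preserved by a copy of $\BS(1,2)$ on which the action is faithful and not semi-conjugate to the affine one, and only then does \cite{BMNR} apply and give the contradiction.

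The second half of your plan is both incomplete and mis-aimed. For the case where all four generators are fixed-point-free you offer only a programme (``incompatible dilatation data'', a ``derivative cocycle around the four-cycle'') rather than an argument, and you assert this is where the $C^1$ hypothesis is genuinely needed. In fact that configuration is impossible for purely topological reasons: if the fixed points of some generator fail to accumulate at $+\infty$, the dichotomy of \cite{rivas} applied around the cycle yields a chain of strict inequalities among the $\rho(a_i)(x)$ for large $x$ that closes up on itself (Claim 1 in the proof of Lemma \ref{l:noC1}), so in any non-trivial action by homeomorphisms every generator has fixed points accumulating at both ends --- note that in the paper's topological action the fixed-point-free elements are $b,d$, while $a,c$ do have fixed points. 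The place where differentiability genuinely enters is the exclusion of Conradian $\BS(1,2)$ dynamics on a compact invariant interval, and that part of the argument is missing from your proposal.
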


Our proof relies on the following:

\begin{lem}\label{l:noC1} Let $\psi:H\to \HR(\R)$ be a non-trivial  representation without global fixed points. Then there is a copy $B$ of the Baumslag-Solitar group $\BS(1,2)$ inside $H$ such that the restriction of $\psi$ to $B$ is faithful, $\psi(B)$ preserves a compact interval $I$, and the restriction of the action of $\psi(B)$ to $I$ is not semi-conjugate to the standard affine action.
\end{lem}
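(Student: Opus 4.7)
My plan is to exhibit, among the four canonical copies $B_i=\langle a_i,a_{i+1}\rangle\simeq\BS(1,2)$ of the Baumslag--Solitar group in $H$ (indices $i\in\Z/4\Z$), one (possibly after conjugation in $H$) satisfying all three required properties. First I will observe that $\psi|_{B_i}$ is faithful for some $i$: writing $\BS(1,2)=\Z[\tfrac12]\rtimes\Z$, any nontrivial normal subgroup $K$ meets $\Z[\tfrac12]=\langle\!\langle\beta\rangle\!\rangle$ nontrivially (for $(x,n)\in K\setminus\{(0,0)\}$, one computes $[\alpha,(x,n)]=(x,0)$ when $x\neq0$, while $[\alpha^n,\beta]=(2^n-1,0)$ when $x=0$ and $n\neq 0$, producing a nonzero element of $K\cap\Z[\tfrac12]$), and closing under $\alpha$-conjugation (which doubles) yields $\beta^N\in K$ for some $N\in\Z_{\ge 1}$. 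The torsion-freeness of $\HR_+(\R)$ then forces $\psi(\beta)=\mathrm{id}$, which cascades cyclically through the Higman relations $a_j a_{j+1}a_j^{-1}=a_{j+1}^2$ to $\psi\equiv\mathrm{id}$, contradicting non-triviality. I then pick such a faithful $B=B_i$ and set $\alpha=\psi(a_i)$, $\beta=\psi(a_{i+1})$.

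Next I will produce a compact $B$-invariant interval. If $\beta$ is fixed-point-free, conjugating so that $\beta(x)=x+1$ turns the relation $\alpha\beta\alpha^{-1}=\beta^2$ into $\alpha(y+1)=\alpha(y)+2$, forcing $\alpha(y)-y$ to grow by one per unit and hence to vanish somewhere; thus $\alpha$ has a fixed point and I would swap the roles of $\alpha,\beta$ below. Assuming $\beta$ has a fixed point, since $\Fix(\beta^2)=\Fix(\beta)$ for orientation-preserving homeomorphisms and $\alpha\beta\alpha^{-1}=\beta^2$, $\alpha$ preserves $\Fix(\beta)$. If some $\alpha$-orbit in $\Fix(\beta)$ is bounded, monotonicity of $\alpha$ forces convergence to a common fixed point $q\in\Fix(\alpha)\cap\Fix(\beta)$, and any bounded connected component of $\R\setminus(\Fix(\alpha)\cap\Fix(\beta))$ yields a compact $B$-invariant interval $I$. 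If instead all $\alpha$-orbits in $\Fix(\beta)$ are unbounded, the $B$-action on $\R$ is of standard affine type with no common fixed point; I would then pivot to a neighboring $B_{i\pm 1}$ (or a conjugate of it by an element of $H$), using the rigidity imposed by the remaining Higman relations on $\psi(a_{i\pm 2})$ together with the absence of global fixed points for $\psi(H)$ to produce a compact invariant interval there.

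For the non-affine conclusion: by construction the two endpoints of $I$ are common fixed points of $\alpha$ and $\beta$, while any semi-conjugacy $\phi\colon I\to[-\infty,+\infty]$ onto the standard affine action would send every fixed point of $\beta$ in $I$ to $\{\pm\infty\}$, since $x\mapsto x+1$ has no finite fixed point. Hence $\Fix(\beta)\cap I$ would be confined to the two boundary-incident maximal intervals collapsed by $\phi$. Choosing $I$ to be a \emph{minimal} bounded complementary component of $\Fix(\alpha)\cap\Fix(\beta)$ places common fixed points of $B$ in the interior of $I$, and this rules out any such $\phi$ and hence semi-conjugacy to the standard affine action.

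The hardest step will be the degenerate case in the second paragraph, where $\psi|_{B_i}$ is globally of standard-affine type and produces no compact invariant interval in $\R$: there one must use the amalgamated product structure $H=H_1*_F H_2$ and the cyclicity of Higman's relations to transfer to a neighboring subgroup $B_{i\pm 1}$ (up to conjugation in $H$), while simultaneously preventing triviality of the new restriction and ruling out global fixed points for $\psi(H)$. Balancing local affine rigidity of one $B_i$ against global fixed-point-freeness of the full $H$-action is the subtle combinatorial core of the proof, and I expect it to be where the amalgamation structure is used most delicately.
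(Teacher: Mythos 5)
Your first paragraph (faithfulness of each $\psi|_{B_i}$, via the fact that a nontrivial normal subgroup of $\BS(1,2)$ contains a power of $a_{i+1}$, plus torsion-freeness of $\HR_+(\R)$ and the cascade through the four relations) is correct. But the remainder has genuine gaps, and they sit exactly where the paper's proof does its real work. The case you yourself flag as unresolved --- the chosen copy acting on the line in a globally affine fashion, with no compact invariant interval --- is not a peripheral degeneracy to be ``pivoted'' away from; it is the core of the lemma. The paper disposes of it by proving that the fixed-point set of \emph{each} generator accumulates at both $\pm\infty$: if not, some $\langle\psi(a_i),\psi(a_{i+1})\rangle$ preserves an unbounded component of $\supp(\psi(a_{i+1}))$, the rigidity result of \cite{rivas} forces the action there to be semi-conjugate to the standard affine action, hence the fixed points of $\psi(a_i)$ also fail to accumulate at $+\infty$, and cycling this through the four relations yields the impossible chain $\psi(a_3)(x)<\psi(a_2)(x)<\psi(a_1)(x)<\psi(a_0)(x)<\psi(a_3)(x)$ for all large $x$. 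Nothing of this cyclic argument appears in your sketch; ``use the amalgamation and the absence of global fixed points'' is a placeholder, not a proof (and in fact the paper's argument does not even need the no-global-fixed-point hypothesis). Your suggestion to ``swap the roles of $\alpha,\beta$'' when $\beta$ is fixed-point-free is also not legitimate: the relation $\alpha\beta\alpha^{-1}=\beta^2$ is asymmetric, and $\Fix(\alpha)$ need not be $\beta$-invariant.

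Even in the case you do treat, your choice of $I$ does not give the non-affine conclusion. The closure of a bounded complementary component of $\Fix(\alpha)\cap\Fix(\beta)$ can perfectly well carry an action conjugate to the standard affine one ($\beta$ fixed-point-free in the interior, $\alpha$ with a single interior fixed point), and your repair --- take a ``minimal'' bounded complementary component so that common fixed points lie in its interior --- is incoherent: complementary components are pairwise disjoint, so minimality has no meaning, and by definition their interiors contain no common fixed points. You also never establish that a bounded complementary component exists at all (the common fixed set could be a single point or a half-line). What is actually needed is the paper's second step: some $\psi(a_i)$ moves a connected component $J$ of $\supp(\psi(a_{i+1}))$ off itself (proved by a nesting argument $I_0\subsetneq I_1\subsetneq I_2\subsetneq I_3$ cycling through the four generators, again using the semi-conjugacy rigidity of \cite{rivas}); one then takes $I$ to be the convex hull of $\bigcup_{n\in\Z}\psi(a_i)^n(J)$, which is compact by the accumulation claim and invariant under both generators, and on which $\psi(a_{i+1})$ has fixed points in the interior --- this is what rules out any semi-conjugacy to the translation-dilation model. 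Your proposal is missing both of these steps, so it does not prove the lemma.
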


Before proving the lemma, let us explain how to deduce the proposition.

\begin{proof}[Proof of Proposition~\ref{p:noC1}]
As pointed out before, $H$ contains several copies of 
$\BS(1,2)$.
After \cite{BMNR} every action of $\BS(1,2)$ inside $\Diff_+(I)$, where $I$ is a compact interval, must be (topologically) conjugate to the standard affine action. By Lemma \ref{l:noC1} there is a copy of $\BS(1,2)$, call it $B\leq H$, that preserves a compact interval $I$ and, moreover, the action of $B$ on $I$ is not conjugate to an affine action (it is a Conradian action in the terminology of Section \ref{sec construction}).  In particular, no action of $H$ on the line is conjugate to an action by $C^1$-diffeomorphisms. 
\end{proof}

Let us next prove the lemma.

\begin{proof}[Proof of Lemma~\ref{l:noC1}]
For the proof it will be convenient to define $H=\langle a_i \mid a_i a_{i+1} a_i^{-1}=a_{i+1}^2\rangle$ where the indices are taken over $\Z/4\Z$. Let $\psi: H\to \HR_+(\R)$ be a non-trivial representation.

\begin{claim}\label{claim1}
The set of fixed points of each $\psi(a_i)$ accumulates on $\pm \infty$.
\end{claim}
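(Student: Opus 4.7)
The plan is to argue by contradiction, using the cyclic structure of the relations $a_{j-1} a_j a_{j-1}^{-1} = a_j^2$ (indices in $\Z/4\Z$). By reversing the orientation of $\R$ and by cyclic symmetry in the generators, it suffices to show that $F_0 := \Fix(\psi(a_0))$ is unbounded above. Suppose for contradiction that $M_0 := \sup F_0 < +\infty$, with the convention $M_0 = -\infty$ meaning $F_0 = \emptyset$. The backbone of the argument will be the following implication, to be established for every $j \in \Z/4\Z$:
\[ M_j < +\infty \quad\Longrightarrow\quad M_{j-1} \in \R \;\text{ and }\; M_{j-1} > M_j. \]
Iterated cyclically around $0 \to 3 \to 2 \to 1 \to 0$, this yields the strict chain $M_0 > M_1 > M_2 > M_3 > M_0$, which is absurd (and yields the equally impossible $M_0 \in \R$ in the extreme case $F_0 = \emptyset$).

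To prove the implication, I would first note that since $\psi(a_j)$ is orientation-preserving on $\R$ it has no $2$-periodic points outside its fixed set, so $\Fix(\psi(a_j)) = \Fix(\psi(a_j)^2)$; using the relation $a_{j-1} a_j a_{j-1}^{-1} = a_j^2$, this gives $F_j = \Fix(\psi(a_{j-1}) \psi(a_j) \psi(a_{j-1})^{-1}) = \psi(a_{j-1})(F_j)$, so $F_j$ is $\psi(a_{j-1})$-invariant. When $M_j \in \R$, closedness of $F_j$ puts $M_j$ into $F_j$, and this invariance combined with the maximality of $M_j$ forces $\psi(a_{j-1})(M_j) = M_j$; hence $M_j$ is a common fixed point of $\psi(a_j)$ and $\psi(a_{j-1})$, and both preserve the ray $(M_j, +\infty)$. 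In the extreme case $M_j = -\infty$, the ray is all of $\R$ and the preservation is automatic.

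On the ray $(M_j, +\infty)$, the homeomorphism $\psi(a_j)$ acts freely (by maximality of $M_j$) and non-trivially. The restriction of $\langle a_{j-1}, a_j \rangle$ yields an action of $\BS(1,2)$ on the ray, with $\alpha = a_{j-1}$ and $\beta = a_j$. This action is faithful: any non-trivial normal subgroup of $\BS(1,2)$ yields a quotient in which the relation forces $\beta = \mathrm{id}$, contradicting the freeness of $\psi(a_j)$. By the classification in \cite{rivas} of faithful $\BS(1,2)$-actions on $\R$ without global fixed points, the restricted action is either affine type or Conradian. The Conradian case requires $\beta = \psi(a_j)$ to fix a fundamental domain of $\alpha$, contradicting freeness; hence it is affine type, meaning it is semi-conjugate via a proper, non-decreasing map $\phi \colon (M_j, +\infty) \to \R$ to the standard affine action, with $\psi(a_{j-1})$ corresponding to $x \mapsto 2x$. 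Then $F_{j-1} \cap (M_j, +\infty) = \phi^{-1}(0)$ is non-empty and compact (by properness of $\phi$): non-emptiness produces a point of $F_{j-1}$ strictly larger than $M_j$, giving $M_{j-1} > M_j$; compactness together with the trivial bound $M_j$ on $F_{j-1} \cap (-\infty, M_j]$ gives $M_{j-1} < +\infty$.

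The delicate point in this plan is the invocation of the classification of $\BS(1,2)$-actions from \cite{rivas}: one needs to use that the semi-conjugacy to the standard affine is proper (in the sense recalled in this paper), which is precisely what forces $F_{j-1} \cap (M_j, +\infty)$ to be bounded above in the affine case. With that step secured, the cyclic contradiction is immediate, and the claim follows.
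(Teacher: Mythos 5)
Your strategy is essentially the paper's, with different bookkeeping: where the paper restricts the pair $(\psi(a_i),\psi(a_{i+1}))$ to an unbounded component of $\supp(\psi(a_{i+1}))$ and derives eventual pointwise inequalities $\psi(a_3)(x)<\psi(a_2)(x)<\psi(a_1)(x)<\psi(a_0)(x)<\psi(a_3)(x)$ for large $x$, you restrict to the ray above $M_j=\sup\Fix(\psi(a_j))$ (after checking that $M_j$ is a common fixed point) and track the suprema, reaching the same cyclic contradiction. The key input is identical: the affine/Conradian dichotomy of \cite{rivas}, with the Conradian case excluded because $\psi(a_j)$ is fixed-point free on the invariant region, and the affine case forcing control on $\Fix(\psi(a_{j-1}))$.

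Two of your justifications need repair, though neither is fatal. First, the faithfulness argument is wrong as stated: a proper quotient of $\BS(1,2)$ need not kill $\beta$ (adding the relation $\beta^3=1$ gives the non-abelian quotient $\Z/3\rtimes\Z$, in which $\beta$ has order $3$). What is true is that every non-trivial normal subgroup meets the normal closure of $\beta$ (a copy of $\Z[1/2]$) non-trivially, so in every proper quotient the image of $\beta$ is torsion; this still contradicts the freeness of $\psi(a_j)$ on the ray, since a fixed-point-free orientation-preserving homeomorphism of an interval has all non-zero powers fixed-point free, hence infinite order. (Your instinct to check faithfulness before invoking \cite{rivas} is right, and is a point the paper leaves implicit.) Second, the asserted equality $F_{j-1}\cap(M_j,+\infty)=\phi^{-1}(0)$ is in general only the inclusion of the left side in the right: $\phi$ may collapse a non-degenerate interval to $0$ on which $\psi(a_{j-1})$ acts non-trivially. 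The inclusion is all you need for boundedness above; for non-emptiness, argue instead that $\phi^{-1}(0)$ is a non-empty compact interval of the ray which is invariant under $\psi(a_{j-1})$ (apply the equivariance to $\psi(a_{j-1})^{\pm 1}$), so $\psi(a_{j-1})$ fixes its endpoints, producing a point of $F_{j-1}$ strictly above $M_j$. With these two corrections the chain $M_0>M_1>M_2>M_3>M_0$ goes through and your proof is complete.
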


\begin{proof}[Proof of Claim]
We argue by contradiction. We will assume that the fixed points of one generator do not accumulate on $+\infty$; the case of fixed points not accumulating on $-\infty$ is treated similarly. After re-indexation, we may also assume that this generator is $a_3$.  Since for orientation preserving actions on the line finite orbits are necessarily fixed points, the relation $a_2a_3a_2^{-1}=a_3^2$ implies that the subset
\[\mathsf{Fix}(\psi(a_3))=\{x\in \R\mid \psi(a_3)(x)=x\}\]
is $\psi(a_2)$-invariant. Therefore, $\psi(a_2)$ maps any connected component of the complement $\mathsf{supp}(\psi(a_3))=\R\setminus \mathsf{Fix}(\psi(a_3))$ to a (not necessarily different) connected component of $\mathsf{supp}(\psi(a_3))$.

Let $I$ be an unbounded component of the support of $\psi(a_3)$ accumulating on $+\infty$. Then, by the previous remark, $\psi(a_2)$ must either preserve or move $I$ off itself. Since $I$ is unbounded, we conclude that $\psi(a_2)$ preserves $I$. It follows from \cite{rivas} that in this case  the action of $\langle \psi(a_2),\psi(a_3)\rangle$ on $I$ is \emph{semi-conjugate} to the standard affine action of $\BS(1,2)$ on $\R$ (i.e. where $a_3$ acts as a translation and $a_2$ as dilation by 2), namely there exists a continuous, non-decreasing and surjective map $\alpha: I\to \R$ such that 
\[\alpha\circ  \psi(a_2) (x)= 2\alpha(x)\quad \text{and}\quad \alpha \circ \psi(a_3)(x)=\alpha(x)\pm 1.\]
The sign of translation depends on the action.
If $\alpha \circ \psi(a_3)(x)=\alpha(x)+ 1$, then $\psi(a_2)(x)>\psi(a_3)(x)>x$ for every $x$ large enough. If $\alpha \circ \psi(a_3)(x)=\alpha(x)- 1$, then $\psi(a_2)(x)>x>\psi(a_3)(x)$ for every $x$ large enough.
In both cases, the set of fixed points of $\psi(a_2)$ does not accumulate on $+\infty$. We can therefore repeat the argument with $a_2$ and $a_1$, then with $a_1$ and $a_0$ and then with $a_0$ and $a_3$ (this time $\psi(a_3)$ acting  as dilation), to get
\[\psi(a_3)(x)<\psi(a_2)(x)<\psi(a_1)(x)<\psi(a_0)(x)<\psi(a_3)(x)\]
for all $x$ large enough. Hence we obtain a contradiction.
\end{proof}

Observe that no $\psi(a_i)$ can be the identity, otherwise the image $\psi(H)$ would be trivial. Therefore every open support $\supp(\psi(a_i))=\{x\in \R\mid \psi(a_i)(x)\neq x\}$ is non-empty, and by Claim~\ref{claim1} all connected components are bounded intervals.

\begin{claim}\label{claim2}
	There is a pair of consecutive generators $a_i,a_{i+1}$ such that $\psi(a_i)$ moves off itself a connected component of the open support of $\psi(a_{i+1})$. 
\end{claim}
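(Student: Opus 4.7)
I argue by contradiction, supposing that for every $i\in\Z/4\Z$ and every connected component $J$ of $\supp(\psi(a_{i+1}))$ one has $\psi(a_i)(J)=J$. (Since $\psi(a_i)$ conjugates $\psi(a_{i+1})$ to $\psi(a_{i+1})^2$ by the relation $a_ia_{i+1}a_i^{-1}=a_{i+1}^2$, it always permutes the components of $\supp(\psi(a_{i+1}))$, so this hypothesis is exactly the negation of the claim.) An immediate consequence is the inclusion $\partial\supp(\psi(a_{i+1}))\subset \Fix(\psi(a_i))$, since the orientation-preserving homeomorphism $\psi(a_i)$ fixes the endpoints of every preserved component.

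The core local step analyses a single component $J$ of $\supp(\psi(a_{i+1}))$. On $J$ the element $\psi(a_{i+1})$ has no fixed point and $\psi(a_i)$ preserves $J$, so the restriction of $\langle \psi(a_i),\psi(a_{i+1})\rangle$ to $J$ is an action of $\BS(1,2)$. This restricted action is faithful, for otherwise (every proper quotient of $\BS(1,2)$ being abelian) $\psi(a_i)$ and $\psi(a_{i+1})$ would commute on $J$, and then the relation would force $\psi(a_{i+1})|_J=\mathrm{id}$, contradicting $J\subset\supp(\psi(a_{i+1}))$. The Conradian alternative of \cite{rivas} is also excluded, because it would require $\psi(a_{i+1})$ to have fixed points on $J$. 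Hence the action on $J$ is semi-conjugate to the standard affine action, with $\psi(a_i)$ playing the role of the dilation. This provides a proper closed connected $\psi(a_i)$-invariant subset $C\subsetneq J$ such that the two open components of $J\setminus C$ are non-empty and contained in $\supp(\psi(a_i))$. Their inner endpoints lie in $\Fix(\psi(a_i))$ by construction, while their outer endpoints lie in $\partial J\subset\partial\supp(\psi(a_{i+1}))\subset \Fix(\psi(a_i))$ by the hypothesis; hence each of these two intervals is a complete connected component of $\supp(\psi(a_i))$ in all of $\R$, strictly contained in $J$. This promotion from ``component of $\supp(\psi(a_i))\cap J$'' to ``component of $\supp(\psi(a_i))$ in $\R$'' is the delicate point of the argument and is exactly where the cyclic assumption is used.

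To conclude, I iterate the local step four times around the cycle. A component $J_0$ of $\supp(\psi(a_0))$ exists because the non-triviality of $\psi$ forces every $\psi(a_i)$ to be non-trivial: if some $\psi(a_i)=\mathrm{id}$, the relation gives $\psi(a_{i+1})=\psi(a_{i+1})^2$, hence $\psi(a_{i+1})=\mathrm{id}$, and cyclically all generators are trivial. Applying the local step with $i=3,2,1,0$ in succession produces strictly nested intervals $J_0\supsetneq J_1\supsetneq J_2\supsetneq J_3\supsetneq J_4$, which are components of $\supp(\psi(a_0))$, $\supp(\psi(a_3))$, $\supp(\psi(a_2))$, $\supp(\psi(a_1))$, $\supp(\psi(a_0))$ respectively. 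But then $J_0$ and $J_4$ are two connected components of the same open set $\supp(\psi(a_0))$ with $J_4\subsetneq J_0$, contradicting that connected components of an open subset of $\R$ are pairwise disjoint or equal.
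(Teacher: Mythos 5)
Your argument is correct and essentially the same as the paper's: in both, whenever $\psi(a_i)$ preserves a component of $\supp(\psi(a_{i+1}))$, the restricted $\BS(1,2)$-action on it is (faithful, non-Conradian, hence) semi-conjugate to the standard affine action, which yields a strictly smaller component of $\supp(\psi(a_i))$, and iterating this around the four generators gives the claim. The only difference is bookkeeping: the paper stops after four nested intervals and observes directly that $\psi(a_3)$ cannot fix both endpoints of the innermost one, whereas you run one extra step and contradict the disjointness of distinct components of $\supp(\psi(a_0))$.
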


\begin{proof}[Proof of Claim]
Let $I_3$ be a connected component of $\supp(\psi(a_3))$. Arguing as in the proof of Claim \ref{claim1}, we see that if $\psi(a_2)$ does not preserve $I_3$, then it must move it off itself, so we are done. So suppose that $\psi(a_2)$ fixes $I_3$. Then the action of $\langle\psi(a_2),\psi(a_3)\rangle$ on $I_3$ is semi-conjugate to the standard affine action of $\BS(1,2)$ (where $\psi(a_3)$ is acting as the translation), and hence there is a connected component of $\supp(\psi(a_2))$  which is strictly contained in $I_3$. If $\psi(a_1)$ does not moves $I_2$ off itself, then we find  a connected component $I_1\subsetneq I_2$ of $\supp(\psi(a_1))$. Again, if $\psi(a_0)$ does not moves $I_1$ off itself, we find a connected component $I_0\subsetneq I_1\subsetneq I_2 \subsetneq I_3$ of $\supp(\psi(a_0))$. But then $I_0\subsetneq\supp(\psi(a_3))$, so $\psi(a_3)$ cannot preserve $I_0$, and therefore it must move $I_0$ off itself.
\end{proof}

Let $i$ be the index given by Claim \ref{claim2}, and let $J$ be a connected component of $\supp(\psi(a_{i+1}))$ which is moved off itself by $\psi(a_i)$. 
Let $I\subset\R$ be the smallest interval containing all the images $\psi(a_i)^n(J)$, $n\in \Z$. Claim~\ref{claim1} applied to $a_{i}$ implies that $I$ is bounded, and moreover its choice gives that it is preserved by both $\psi(a_i)$ and $\psi(a_{i+1})$.
Clearly the restriction of the action of $\langle \psi(a_i),\psi(a_{i+1})\rangle\cong \BS(1,2)$ to $I$ is faithful, and is not semi-conjugate to the standard affine action.
\end{proof}

We can finally conclude.

\begin{proof}[Proof of Theorem~\ref{t:noC1}]
	A direct consequence of Proposition~\ref{p:noC1} is that the analogous result holds for the circle, namely that every representation $\psi: H\to \Diff(\mathbb S^1)$ is trivial. This follows from the well-known fact that Higman's group is acyclic \cite{acyclic}, in particular $H^2(H,\Z)=0$ and every (continuous) action on the circle lifts to an action on the real line (the obstruction to such a lift is represented by the Euler class of the action, which is a $2$-cocycle \cite{ghys}).
	
	We can also use a more dynamical argument.
	For this, suppose such $\psi$ is given. 
	
	\begin{claim}
		For every $i\in \Z/4\Z$, the group $\langle \psi(a_i),\psi(a_{i+1})\rangle$ has a global fixed point in $\mathbb S^1$.
	\end{claim}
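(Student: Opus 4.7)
The plan is to establish the claim by a purely topological argument based on rotation numbers, so that the $C^1$ hypothesis is not invoked at this step. Concretely, I will show that for each $i\in\Z/4\Z$ the sets of fixed points $F_i:=\Fix(\psi(a_i))$ and $F_{i+1}:=\Fix(\psi(a_{i+1}))$ intersect, which is exactly the statement that $\langle \psi(a_i),\psi(a_{i+1})\rangle$ has a global fixed point on $\mathbb S^1$.

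First, I would compute the rotation numbers. The defining relation $a_{i-1}a_i a_{i-1}^{-1}=a_i^2$ says that $\psi(a_i)$ and $\psi(a_i)^2$ are conjugate in $\HR_+(\mathbb S^1)$; since rotation number is a conjugacy invariant, this gives $\rot(\psi(a_i))=2\rot(\psi(a_i))\pmod 1$, so $\rot(\psi(a_i))=0$ and each $F_i$ is nonempty. Next, I would record the elementary fact that for any orientation-preserving $f\in\HR_+(\mathbb S^1)$ with $\rot(f)=0$, one has $\Fix(f^n)=\Fix(f)$ for every $n\neq 0$: on each component of $\mathbb S^1\setminus\Fix(f)$, $f$ acts as a fixed-point-free orientation-preserving homeomorphism, hence is topologically conjugate to a translation and has no periodic points. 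Applied to $f=\psi(a_{i+1})$, together with the relation $a_i a_{i+1}a_i^{-1}=a_{i+1}^2$, this yields that $F_{i+1}$ is $\psi(a_i)$-invariant. Consequently $\psi(a_i)$ permutes the connected components of the open set $\mathbb S^1\setminus F_{i+1}$, preserving cyclic order.

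Finally, I would conclude by contradiction. Suppose $F_i\cap F_{i+1}=\emptyset$. Since $F_i\neq\emptyset$, pick $x\in F_i$; then $x\in\mathbb S^1\setminus F_{i+1}$, so let $J$ be the connected component containing $x$. Because $\psi(a_i)(x)=x\in J$ and $\psi(a_i)$ permutes components, $\psi(a_i)(J)=J$. Since $F_{i+1}$ is nonempty, $J$ is a proper open arc of $\mathbb S^1$; its two endpoints are then well-defined, and orientation-preservation of $\psi(a_i)$ forces them to be fixed by $\psi(a_i)$. These endpoints lie in $\partial J\subset F_{i+1}$, so they belong to $F_i\cap F_{i+1}$, contradicting the assumption.

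The only mildly delicate point is the $\psi(a_i)$-invariance of $F_{i+1}$, which relies on the equality $\Fix(\psi(a_{i+1})^2)=\Fix(\psi(a_{i+1}))$ derived from the vanishing of the rotation number; everything else is a short elementary verification. I expect no real obstacle beyond this.
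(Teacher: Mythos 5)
Your argument is correct, and it takes a genuinely different route from the paper. The paper exploits amenability of $\BS(1,2)$: it invokes an invariant Borel probability measure $\mu_i$ for $\langle\psi(a_i),\psi(a_{i+1})\rangle$, uses the fact that $\rot$ is then a homomorphism on this subgroup to get $\rot\psi(a_{i+1})=2\rot\psi(a_{i+1})=0$, and concludes by observing that $\mu_i$ must be concentrated on $\Fix(\psi(a_i))\cap\Fix(\psi(a_{i+1}))$. You avoid measures altogether: the identities $\rot(f^n)=n\rot(f)\pmod 1$ and conjugacy-invariance of $\rot$ already force $\rot\psi(a_i)=0$ for every $i$ from the relations alone, then $\Fix(\psi(a_{i+1})^2)=\Fix(\psi(a_{i+1}))$ gives $\psi(a_i)$-invariance of $\Fix(\psi(a_{i+1}))$, and the endpoint argument on the invariant complementary arc $J$ produces a common fixed point directly. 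This is more elementary and self-contained (no amenability, no invariant measure), and it runs parallel to the combinatorial analysis the paper carries out for actions on the line in Lemma \ref{l:noC1}; what the paper's version buys is brevity given the cited background, plus the (unused) extra information of an invariant measure supported on the common fixed set. Two cosmetic points: your contradiction is not really needed, since the endpoints of $J$ are fixed by $\psi(a_i)$ and lie in $\Fix(\psi(a_{i+1}))$ in any case; and in the degenerate situation where $\Fix(\psi(a_{i+1}))$ is a single point the ``two endpoints'' of $J$ coincide, but the argument goes through verbatim since $\psi(a_i)(J)=J$ forces that boundary point to be fixed. (Like the paper, you implicitly use that the action is by orientation-preserving homeomorphisms; this is justified by the paper's remark that $H$ has no proper finite-index subgroups.)
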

	\begin{proof}[Proof of Claim]
	Observe that the group $\langle a_i,a_{i+1}\rangle\cong\BS(1,2)$ is amenable. This implies that there exists a Borel probability measure $\mu_i$ on $\mathbb S^1$ which is invariant by the group $\langle \psi(a_i),\psi(a_{i+1})\rangle$, and moreover, the rotation number restricts to a group homomorphism $\rot:\langle \psi(a_i),\psi(a_{i+1})\rangle\to \R/\Z$ (see \cite{ghys}). We deduce that \[\rot\psi(a_{i+1})=\rot\psi(a_ia_{i+1}a_i^{-1})=\rot\psi(a_{i+1}^2)=2\rot\psi(a_{i+1}),\]
	so that $\rot\psi(a_{i+1})=0$, implying that $\psi(a_{i+1})$ has a fixed point. Cycling over the indices, we get that also $\psi(a_i)$ has a fixed point, and thus the probability measure $\mu_i$ must be supported on points that are fixed by both $\psi(a_i)$ and $\psi(a_{i+1})$, and thus by $\langle \psi(a_i),\psi(a_{i+1})\rangle$.
	\end{proof}
	
	From \cite{BMNR} we know that for every maximal interval $I\subset \mathbb S^1$ on which  $\langle \psi(a_i),\psi(a_{i+1})\rangle$ has no global fixed point, we either have \begin{enumerate}
		\item $\psi(a_{i+1})$ is the identity in restriction to $I$ or
		\item the restriction to $I$ of the action of $\langle \psi(a_i),\psi(a_{i+1})\rangle$ is conjugate to the standard affine action, and moreover the derivative of $\psi(a_i)$ at the internal fixed point equals $2$.
	\end{enumerate} 
	Continuity of derivatives implies that there are only finitely many intervals of the second type. Therefore $\supp(\psi(a_{i+1}))$ has finitely many connected components. Repeating the argument with the subgroup $\langle \psi(a_{i+1}),\psi(a_{i+2})\rangle$, we obtain that not only $\supp(\psi(a_{i+2}))$ has finitely many components, but moreover, they are strictly fewer than those of $\supp(\psi(a_{i+1}))$. Cycling over the indices, one gets a contradiction.
\end{proof}


\section*{Acknowledgments} 
C.R. would like to thank the organizers of Groups of Dynamical Origin II where he learned about the question of orderability of Higman's group. In particular, he is grateful to Yves de Cornulier for sharing this question and to Andr\'es Navas for his constant encouragement.  M.T. acknowledges the hospitality of USACH wile part of this project was carried  on.

\bibliographystyle{amsplain}


\begin{dajauthors}
\begin{authorinfo}[rivas]
	Crist\'obal Rivas\\
Dpto.~de Matem\'aticas y C.C., Universidad de Santiago de Chile\\
Alameda 3363, Estaci\'on Central, Santiago, Chile\\
cristobal.rivas\imageat{}usach\imagedot{}cl\\
{\footnotesize\url{http://www.mat.usach.cl/index.php/2012-12-19-12-50-19/academicos/183-cristobal-rivas}}
\end{authorinfo}
\begin{authorinfo}[tries]
	Michele Triestino\\
	Institut de Math\'ematiques de Bourgogne (IMB, UMR 5584)\\
9 av.~Alain Savary, 21000 Dijon, France\\
michele.triestino\imageat{}u-bourgogne\imagedot{}fr\\
\url{http://mtriestino.perso.cnrs.fr}
\end{authorinfo}
\end{dajauthors}

\end{document}